\newtheorem{theorem}{Theorem}
\newtheorem{lemma}[theorem]{Lemma}
\newtheorem{prop}[theorem]{Proposition}
\newtheorem*{theorem*}{Theorem}
\newtheorem*{corollary*}{Corollary}
\theoremstyle{definition}
\newtheorem*{remark*}{Remark}
\newtheorem{question}[theorem]{Question}
\newtheorem*{definition*}{Definition}
\newtheorem*{example*}{Example}
\numberwithin{theorem}{section}
\newcommand{\BR}{\mathbb R} 
 \newcommand{\BZ}{\mathbb Z}
\newcommand{\CI}{\mathcal I}
\newcommand{\wt}{\widetilde}
\newcommand{\wh}{\widehat}
\newcommand{\nid}{\noindent}
\newcommand{\tup}{\textup}
\DeclareMathOperator{\GL}{GL}
\newcommand{\comment}[1]{}
\patchcmd{\epigraph}{\@epitext{#1}}{\itshape\@epitext{#1}}{}{}
\begin{document}
\title  [Separating Subgroups]  {Separating subgroups of mapping class groups in homological representations}
\author   {Asaf Hadari}
\date{\today}
\begin{abstract} Let $\Gamma$ be either the mapping class group of a closed surface of genus $\geq 2$, or the automorphism group of a free group of rank $\geq 3$. Given any homological representation $\rho$ of $\Gamma$ corresponding to a finite cover, and any term $\CI_k$ of the Johnson filtration, we show that $\rho(\CI_k)$ has finite index in $\rho(\CI)$, the Torelli subgroup of $\Gamma$. Since $[\CI: \CI_k] = \infty$ for $k > 1$, this implies for instance that no such representation is faithful.   \end{abstract}
\maketitle

\vspace {10mm}

\section{Introduction} \label{intro}

Let $X = X^n_{g,b}$ be an oriented surface of genus $g$ with $b$ boundary components and $n$ punctures.  The mapping class group of $X$, or  $\tup{Mod}(X) = \tup{Mod}^n_{g,b}$ is the group of orientation preserving diffeomorphisms $X \to X$ that fix the boundary and punctures pointwise, up to isotopies that fix the boundary point wise. 
 
Mapping class groups have a large collection of finite dimensional representations called \emph{homological representations}. For every finite characteristic cover $f: Y \to X$ we can associate a representation $\rho = \rho_f: \tup{Mod}^{n+1}_{g,b} \to \tup{GL}(H_1(Y, \BZ))$ in the following way.

Pick a point $\star \in X$. Suppose $f: Y \to X$ corresponds to  characteristic subgroup $K \leq \pi_1(X, \star)$. Let $\Gamma \cong \tup{Mod}^{n+1}_{g,b}$ be the mapping class group of the surface $X'$ which is obtained from $X$ by adding a puncture at $\star$. We can think of $\Gamma$ as the group of orientation preserving diffeomorphisms $X \to X$ that fix the boundary, punctures, and $\star$ pointwise up to isotopies that fix the boundary and $\star$ point wise. There is a natural map $\Gamma \to \tup{Aut}(\pi_1(X, \star))$. Since, $K$ is a characteristic subgroup, restriction gives a map $\Gamma \to \tup{Aut(K)}$. This induces a map $\Gamma \to \tup{Aut}(H_1(K, \BZ)) \cong \GL(H_1(Y, \BZ))$. 

Our goal in this paper is to address two basic questions about homological representations. These questions fit into a larger meta-question: 

\begin{question} \label{urq} Which properties of the mapping class group can we discern in its homological representations? 
\end{question}

\nid The main question question we address in this paper is the following: 

\begin{question} \label{q1}
Let $H \leq G \leq \Gamma$ be subgroups of $\Gamma$ such that $[G:H] = \infty$. Is there a homological representation $\rho_f$ such that $[\rho_f(G):\rho_f(H)] = \infty$? In other words, can the homological representation theory of mapping class groups discern whether or not one subgroup has infinite index in another?
\end{question}

\nid When $H$ is the trivial group, the answer to question \ref{q1} is yes. For example in \cite{eigoffuc} we showed that if the surface $X$ has boundary components then any element of $\Gamma$ with positive topological entropy has infinite order in some homological representation (and in fact, has eigenvalues off of the unit circle). Yi Liu proved a similar theorem in \cite{YiLiu} that holds for closed surfaces as well. In \cite{nonsolv}, we proved an even stronger result - if $H$ is the trivial group and $G$ is a non-amenable subgroup of $\Gamma$, then there is some homological representation $\rho_f$ such that $\rho_f(G)$ is non-amenable (and in particular infinite). 

This paper is concerned with the opposite direction - what happens when $H$ is a large subgroup of the mapping class group? I turns out that in this case a very different phenomenon occurs in which the answer to Question \ref{q1} can be negative. Namely, we show the following: 

\begin{theorem} \label{theorem1} Suppose that $X$ is a closed surface of genus $\geq 2$. Then there exist subgroups $H \leq G \leq \Gamma$ such that $[G:H] = \infty$ and $[\rho(G):\rho(H)] < \infty$ for every homological representation $\rho$.  
\end{theorem}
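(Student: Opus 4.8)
The plan is to derive Theorem~\ref{theorem1} from the stronger assertion of the abstract: take $G=\CI$, the Torelli subgroup of $\Gamma$, and $H=\CI_k$ for some $k\geq 2$. That $[\CI:\CI_k]=\infty$ is classical, since $\CI/\CI_2\cong\wedge^3H$ (with $H:=H_1(X;\BZ)$) is already infinite. So everything reduces to proving $[\rho(\CI):\rho(\CI_k)]<\infty$ for every homological representation $\rho$, and I will describe this for $g:=\mathrm{genus}(X)\geq 3$, where $\CI$ is finitely generated (Johnson); the case $g=2$, in which $\CI$ is infinitely generated, will need a separate treatment. \emph{Step 1 (reduce to a statement about $\tau_1$).} Since $\CI_k$ is characteristic in $\Gamma$, $\rho(\CI_k)$ is normal in $\rho(\CI)$ and $\rho(\CI)/\rho(\CI_k)$ is a quotient of the finitely generated nilpotent group $\CI/\CI_k$, hence is finite iff it admits no surjection to $\BZ$. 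A surjection $\rho(\CI)/\rho(\CI_k)\to\BZ$ pulls back to a homomorphism $\CI\to\BZ$ killing both $\CI_k$ and $\ker\rho\cap\CI$; being a map to an abelian group it factors through $\CI/(\CI_k[\CI,\CI])$, and by Johnson's computation $H_1(\CI;\BQ)\cong\wedge^3H_{\BQ}$ together with $[\CI,\CI]\subseteq\CI_2$ this quotient has the same rational rank as $\CI/\CI_2\cong\wedge^3H$, the quotient map being rationally the first Johnson homomorphism $\tau_1\colon\CI\to\wedge^3H$. Unwinding this, for every $k\geq 2$ the finiteness of $\rho(\CI)/\rho(\CI_k)$ is equivalent to the single statement that $\tau_1(\ker\rho\cap\CI)$ has finite index in $\wedge^3H$.

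\emph{Step 2 (representation theory; the point-pushing half).} Because $\ker\rho$ and $\CI$ are normal in $\Gamma$ and $\tau_1$ is equivariant for the $\Gamma$-action on $\wedge^3H$ through $\Gamma\twoheadrightarrow\mathrm{Sp}(2g,\BZ)$, the group $\tau_1(\ker\rho\cap\CI)$ is $\mathrm{Sp}(2g,\BZ)$-invariant, so its $\BQ$-span is an $\mathrm{Sp}(2g,\BQ)$-submodule of $\wedge^3H_{\BQ}$. For $g\geq 3$ one has $\wedge^3H_{\BQ}=V_{\omega_1}\oplus V_{\omega_3}$, a sum of distinct irreducibles, with $V_{\omega_1}=H_{\BQ}\wedge\omega$ for $\omega\in\wedge^2H$ the symplectic form; hence $\tau_1(\ker\rho\cap\CI)$ has finite index as soon as its span meets both summands. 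The summand $V_{\omega_1}$ is handled by point-pushing: for $\gamma\in\pi_1(X,\star)$ the class $\mathrm{Push}(\gamma)$ acts on $H_1(Y;\BZ)=\pi_1(Y)^{\mathrm{ab}}$ through the \emph{finite} group $\pi_1(X)/\pi_1(Y)$, hence trivially when $\gamma\in\pi_1(Y)$, while $\tau_1(\mathrm{Push}(\gamma))=[\gamma]\wedge\omega$ and $\{[\gamma]:\gamma\in\pi_1(Y)\}$ has finite index in $H$. It thus remains to exhibit a single $\phi_0\in\ker\rho\cap\CI$ with $\tau_1(\phi_0)\notin H_{\BQ}\wedge\omega$; irreducibility of $V_{\omega_3}$ then forces the span to be all of $\wedge^3H_{\BQ}$, finishing the proof.

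The natural way to build such a $\phi_0$ is geometric. One would look for a positive-genus subsurface $W\subseteq Y$ disjoint from all of its deck translates, take a genus-one bounding-pair map supported in $W$, spread it over the translates to obtain a deck-equivariant element of the Torelli group of $Y$, and push it down to a Torelli element $\phi_0$ of $X$ supported in the embedded subsurface $f(W)\subseteq X$. Then $\phi_0\in\ker\rho$ because its canonical lift is a product of bounding-pair maps of $Y$, hence acts trivially on $H_1(Y)$, while $\tau_1(\phi_0)$ is the Johnson class of a genus-one bounding pair, which has nonzero $V_{\omega_3}$-component. This construction goes through whenever $X$ admits a positive-genus subsurface that lifts to $Y$.

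\emph{The main obstacle.} The hard point — where I expect the real work to lie — is that this can fail for ``degenerate'' covers. For the cover whose deck group is $H_1(X;\BZ/2)$, the group $\pi_1(Y)$ contains no primitive homology class, so \emph{no} positive-genus subsurface of $X$ lifts (a genus-one handle would require a symplectic pair of even classes, which is impossible), and the construction above has no input. For such covers one must manufacture the required $V_{\omega_3}$-class by a subtler argument adapted to the necessarily genus-zero liftable structure of the cover — this is presumably what the combinatorial/homological machinery developed in the body of the paper is for. Carrying this out uniformly over all finite characteristic covers, and coping with the analogous difficulty at higher Johnson levels that the $g=2$ case forces, is, I expect, the technical core.
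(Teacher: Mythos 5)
Your reduction is essentially the one used in this paper: take $G=\CI$, $H=\CI_k$, use the fact that $\CI/\CI_k$ is finitely generated nilpotent to reduce to showing the image of $\ker\rho\cap\CI$ in $\CI/\CI_2\cong\Lambda^3 H$ has finite index, split $\Lambda^3 H$ into the two $\tup{Sp}(2g,\BZ)$-irreducible pieces $H$ and $\Lambda^3 H/H$, and handle the $H$-summand with point-pushing maps, which lift to multi-point pushes of the closed cover $Y$ and therefore die under $\rho$. This matches Lemma~\ref{nilp}, the decomposition described in Section~\ref{proofsketch}, and the first half of Lemma~\ref{mainlemma}.

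The genuine gap is exactly where you flag it: exhibiting, for \emph{every} finite characteristic cover, an element of $\ker\rho\cap\CI$ whose $\tau_1$-image has nonzero $\Lambda^3 H/H$-component. Your proposed construction --- a genus-one bounding-pair map supported in a subsurface of $X$ that lifts homeomorphically to $Y$, spread deck-equivariantly --- requires such a liftable positive-genus subsurface to exist, and you correctly observe this fails for, e.g., the mod-$2$ homology cover. The device used here that works unconditionally is the \emph{curve-pushing map} of Section~\ref{pushmaps}: choose a separating simple closed curve $\delta\subset X$ and push it along a curve $\gamma$ in one complementary piece, chosen so that a suitable power $\gamma^s$ satisfies the \emph{cyclic generation criterion}, which forces the lift of the push to $Y$ to be a single multi-curve push. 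Because the lifts of $\delta$ are separating in $Y$ and the pushing curves there all share one homology class, the explicit formula of Lemma~\ref{multicurve} vanishes term by term, so the push lies in $\ker\rho$; Johnson's formula then shows its class in $\Lambda^3 H$ is not in the $H$-summand. This construction is the technical core of the proof and is precisely what your proposal lacks.

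Two smaller points. Your concern about $g=2$ is a non-issue: $\CI/\CI_k$ is finitely generated nilpotent for all $g\geq 2$ whether or not $\CI$ itself is finitely generated, and in genus $2$ one has $\Lambda^3 H_\BQ\cong H_\BQ$ (hard Lefschetz), so the point-pushing argument alone already suffices there. Also, you need not restrict to $\gamma\in\pi_1(Y)$: since the cover $Y\to X$ is unbranched at $\star$, \emph{every} point-push of $X'$ lifts to a multi-point push of $Y$ and hence acts trivially on $H_1(Y;\BZ)$.
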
 

The subgroups we use in our proof of Theorem \ref{theorem1} are well known subgroups. Let $\CI = \CI_1$ be the Torelli subgroup of $\Gamma$, and $\CI_k$ be the $k$-th term in the Johnson filtration (these groups are defined below in \ref{Jfil}). We prove the following:

\begin{theorem} \label{theorem2} Let $X$ be a closed surface of genus $\geq 2$.For every $k > 0$, and every homological representation $\rho$ of $\Gamma$, $[\rho(\CI):\rho(\CI_k)] < \infty$. 
\end{theorem}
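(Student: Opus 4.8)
My plan is to reduce the statement to the successive quotients of the Johnson filtration and then to exploit that each $\CI_k$ is normal in $\Gamma$. Since $\rho(\CI)=\rho(\CI_1)\supseteq\rho(\CI_2)\supseteq\cdots\supseteq\rho(\CI_k)$, one has $[\rho(\CI):\rho(\CI_k)]=\prod_{i=1}^{k-1}[\rho(\CI_i):\rho(\CI_{i+1})]$, so it suffices to prove $[\rho(\CI_i):\rho(\CI_{i+1})]<\infty$ for every $i\ge 1$. Because $\CI_i/\CI_{i+1}$ is finitely generated free abelian — it embeds, via the $i$-th Johnson homomorphism, into $\Hom(H_1(X;\BZ),\mathcal L_{i+1})$, where $\mathcal L_{i+1}$ is the degree $i+1$ part of the free Lie ring on $H_1(X;\BZ)$ — the quotient $\rho(\CI_i)/\rho(\CI_{i+1})$ is a finitely generated abelian group, and what must be shown is that it is finite. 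I record two facts about the target: since $K\le\pi_1(X)$ is characteristic, every mapping class induces an orientation-preserving homotopy self-equivalence of $Y$ compatible with the covering, so $\rho(\Gamma)\subseteq\Sp(H_1(Y;\BZ))$, and the transfer realizes $H_1(X;\BQ)$ as a $\Gamma$-stable symplectic direct summand of $H_1(Y;\BQ)$ on which $\Gamma$ acts by its standard symplectic representation, the induced surjection $\rho(\Gamma)\twoheadrightarrow\Sp_{2g}(\BZ)$ having kernel exactly $\rho(\CI)$; and each $\CI_j\trianglelefteq\Gamma$ forces $\rho(\CI_j)\trianglelefteq\rho(\Gamma)$.

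The core of the argument would be to pass to $\rho(\Gamma)$ and use rigidity. Let $\mathbf G$ be the Zariski closure of $\rho(\Gamma)$. I would use that $\mathbf G$ is semisimple and that $\rho(\Gamma)$ is, up to finite index, an arithmetic subgroup of $\mathbf G$. Semisimplicity comes for free: $\rho$ is the monodromy representation of the family of Riemann surfaces obtained by letting the cover $Y$ vary, so its integral local system underlies a polarized variation of Hodge structure, and by Deligne's theorem the monodromy group of such a structure, once its constant part (here the $H_1(X)$-summand) has been split off, is semisimple. The arithmeticity of $\rho(\Gamma)$ in $\mathbf G$ is the substantive ingredient, and the step I expect to be the main obstacle: one must either invoke the known arithmeticity theorems for these Prym-type monodromy groups or isolate and prove the weaker statement that is actually needed. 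Granting arithmeticity, together with the fact that $\mathbf G$ has no almost-simple factor of real rank one, Margulis' normal subgroup theorem then shows that every normal subgroup of $\rho(\Gamma)$ — in particular every $\rho(\CI_j)$ — is either finite or of finite index in $\rho(\Gamma)$.

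With this dichotomy the theorem follows quickly. One may assume $\rho(\CI)$ is infinite, since otherwise $[\rho(\CI):\rho(\CI_k)]\le|\rho(\CI)|<\infty$; then the infinite normal subgroup $\rho(\CI)$ has finite index in $\rho(\Gamma)$. Suppose toward a contradiction that $\rho(\CI_k)$ is finite. Then $Q:=\rho(\Gamma)/\rho(\CI_k)$ is commensurable with $\rho(\Gamma)$, hence again arithmetic in $\mathbf G$, and $\rho(\CI)/\rho(\CI_k)$ is an infinite normal subgroup of $Q$, so it has finite index in $Q$; but then $Q\big/\big(\rho(\CI)/\rho(\CI_k)\big)\cong\rho(\Gamma)/\rho(\CI)\cong\Sp_{2g}(\BZ)$ would be finite, which is absurd. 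Hence $\rho(\CI_k)$ is infinite, so it has finite index in $\rho(\Gamma)$, and therefore $[\rho(\CI):\rho(\CI_k)]\le[\rho(\Gamma):\rho(\CI_k)]<\infty$.

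The one loose end is the hypothesis that $\mathbf G$ has no rank-one factors. Such factors arise only for a short, explicit list of small covers of the closed genus-two surface — precisely those for which some isotypic component of $H_1(Y;\BQ)$ under the deck group is two-dimensional, producing an $\SL_2$-type factor — and for those covers the normal subgroup theorem has to be replaced by a direct examination of the image of $\Gamma$, and of $\rho(\CI_k)$, inside that factor. So the genuinely hard parts of the proof are, in order: establishing (or bypassing) the arithmeticity of $\rho(\Gamma)$, which is the crux; and then handling the finitely many low-rank exceptional covers by hand.
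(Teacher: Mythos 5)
There is a genuine gap, and it is one the paper itself points to. Your entire argument hinges on the claim that $\rho(\Gamma)$ is arithmetic (up to finite index) in the Zariski closure $\mathbf{G}$, so that Margulis' normal subgroup theorem becomes available. But this arithmeticity is precisely the open problem the paper records as Question \ref{q2} in Section \ref{Image}, and the paper explicitly observes there that ``a positive answer to Question \ref{q2} would imply our Theorem \ref{theorem2}'' --- via exactly the Margulis NST argument you sketch. The theorem is being offered as \emph{evidence} for Question \ref{q2}, not derived from it. Arithmeticity of $\rho(\Gamma)$ is known only for special families of covers (redundant covers by Gr\"unewald--Larsen--Lubotzky--Malestein, abelian covers by Looijenga), not for an arbitrary finite characteristic cover, which is the generality Theorem \ref{theorem2} requires. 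You acknowledge this as ``the main obstacle,'' but you neither prove it nor extract a weaker substitute, so the proposal does not close. The secondary loose end you flag (ruling out rank-one factors of $\mathbf{G}$) is likewise left open.

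The paper's proof is unconditional and elementary by comparison. It does not pass through $\rho(\Gamma)$ at all: it works inside the abelianized Torelli quotient $\CI/\CI_2 \cong \Lambda^3 H \cong H \oplus \Lambda^3H/H$, constructs two explicit elements of $\ker\rho$ --- a point-pushing map $\phi$ and a curve-pushing map $\psi$ --- that land in $\CI \setminus \CI_2$ and project nontrivially onto each of the two irreducible $\tup{Sp}(2g,\BZ)$-summands (Lemma \ref{mainlemma}), observes that their entire $\tup{Sp}(2g,\BZ)$-orbits lie in the kernel of $\CI/\CI_2 \to \rho(\CI)/\rho(\CI_2)$, and concludes by irreducibility that this kernel has finite index; the case $k \geq 3$ then reduces to the $k=2$ case by the nilpotency lemma (Lemma \ref{nilp}). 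Your telescoping reduction $[\rho(\CI):\rho(\CI_k)] = \prod_i [\rho(\CI_i):\rho(\CI_{i+1})]$ is a fine first step, but everything after it rests on an unproved input. If you want to salvage the rigidity route, you would need to first prove arithmeticity for all characteristic covers (a substantially harder theorem than the one at hand), or find a weaker structural statement about $\rho(\Gamma)$ that still yields the normal-subgroup dichotomy --- neither of which is currently in reach.
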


\nid This is enough to prove theorem \ref{theorem1} since $[\CI: \CI_k] = \infty$ for every $i \geq 2$. 

\smallskip

We also prove an analog of Theorem \ref{theorem2} for $\tup{Aut}(F_n)$ with $n \geq 3$. Given any characteristic subgroup $K < F_n$, we get a representation $\rho: \tup{Aut}(F_n) \to \tup{GL}(H_1(K, \BZ))$. The group $\tup{Aut}(F_n)$ also has subgroups $\CI_k$ (which we define in \ref{Jfil}). We prove the following theorem: 

\begin{theorem}\label{theorem3} Let $n \geq 3$. For every $k$ and every homological representation $\rho$ of $\tup{Aut}(F_n)$, $[\rho(\CI):\rho(\CI_k)] < \infty$. 
\end{theorem}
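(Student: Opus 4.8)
The plan is to reduce the statement about $\mathrm{Aut}(F_n)$ to a linear-algebra statement about the action of a finite group, exploiting the fact that the image $\rho(\mathrm{Aut}(F_n))$ is an arithmetic-type group while the successive quotients $\CI/\CI_k$ are nilpotent. Concretely, fix the characteristic subgroup $K < F_n$ of finite index, with quotient $Q = F_n/K$ a finite group, and set $V = H_1(K,\BZ)$. The key observation is that $V\otimes\BQ$ carries a natural action of $Q$ (the deck group), and the Torelli image $\rho(\CI)$ normalizes this $Q$-action and in fact lands in the centralizer-type subgroup coming from the Gaschütz/representation-theoretic decomposition $V\otimes\BQ \cong \bigoplus_{\chi} V_\chi$ into isotypic pieces. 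First I would make precise how $\rho(\CI)$ sits inside $\prod_\chi \GL(V_\chi)$, and record that on each isotypic piece the relevant image is contained in a classical arithmetic group (a symplectic or unitary or general linear group over a number ring determined by $\chi$).

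The heart of the argument is then an induction on $k$. The base case $k=1$ is trivial. For the inductive step, I would use that $\CI_k/\CI_{k+1}$ is a finitely generated abelian group admitting a surjection from (a subquotient of) the free Lie algebra on $H_1(F_n,\BZ)$ — this is the content of the Johnson-type computation referenced in the paper's Section on the Johnson filtration. The strategy is to show that $\rho(\CI_{k})$ already contains a finite-index subgroup of $\rho(\CI_{k+1})$ by producing, for each "new" direction in $\CI_{k+1}$ coming from a Lie bracket $[\CI_1,\CI_k]$, an explicit element of $\rho(\CI_1)\cdot\rho(\CI_k)$ realizing it up to finite index. Since $\rho$ is a homomorphism, $\rho([a,b]) = [\rho(a),\rho(b)]$, so the image of $\CI_{k+1}$ is generated (up to the image of deeper terms) by commutators of elements of $\rho(\CI_1)$ with elements of $\rho(\CI_k)$. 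Here the arithmeticity enters: in a lattice in a semisimple group, the commutator subgroup $[\Lambda,\Lambda']$ of two finite-index "pieces" is again finite-index in the appropriate ambient group, by a strong approximation / bounded generation input (Tits, Bass–Milnor–Serre, or the Margulis normal subgroup theorem in the relevant ranks), and this is where the hypothesis $n\geq 3$ is used to guarantee sufficiently high rank so that the arithmetic groups in question have the congruence subgroup property and no troublesome rank-one factors.

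The main obstacle — and the step I expect to require the most care — is controlling what happens on the isotypic components $V_\chi$ for $\chi$ a nontrivial irreducible character of $Q$: unlike the trivial component (which just recovers the standard symplectic/$\GL_n(\BZ)$ representation on $H_1(F_n,\BZ)$), these components need not be obviously arithmetic, and $\rho(\CI)$ restricted to $V_\chi$ could a priori be a thin subgroup. To handle this I would invoke the structure theory of the "higher Prym" representations: the Zariski closure of $\rho(\CI)$ on each $V_\chi$ is a reductive group defined over a number field, and one shows (using that $\CI$ is generated by bounded-support elements — separating twists and bounding-pair maps in the surface case, or the analogous generators for $\mathrm{Aut}(F_n)$, whence the "separating subgroups" of the title) that this closure is semisimple with no abelian factors. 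Once the Zariski closure is known to be semisimple and the arithmeticity of $\rho(\CI_k)$ inside it is established for each $k$ (via the commutator computation above), the finite-index conclusion $[\rho(\CI):\rho(\CI_k)]<\infty$ follows by intersecting over the finitely many components $\chi$. A final routine step passes from $\BQ$-coefficients back to $\BZ$-coefficients, which only introduces another finite-index discrepancy.
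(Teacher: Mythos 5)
The proposal takes a fundamentally different route from the paper, and the route has a genuine gap. The paper's proof of Theorem 3 is elementary and direct: it constructs a single explicit automorphism $\phi \in \CI \setminus \CI_2$, namely the map $a_1 \mapsto a_1[a_2^m, a_3^m]$, $a_i \mapsto a_i$ for $i > 1$, where $m = [F_n : K]$, and shows by a short simplicial chain computation on the covering graph that $\phi \in \ker \rho_K$ (because $[a_2^m, a_3^m] \in [K,K]$ is a null-homologous loop upstairs). Since $\ker \rho_K$ is normal in $\tup{Aut}(F_n)$, the entire $\tup{SL}(n,\BZ)$-orbit of the nonzero class $[\phi] \in \CI/\CI_2 \cong \Lambda^3 H$ lies in the kernel of $\CI/\CI_2 \to \rho_K(\CI)/\rho_K(\CI_2)$; irreducibility of $\Lambda^3 H$ then forces that quotient to be finite. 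Lemma \ref{nilp} about finitely generated nilpotent groups bootstraps the $k=2$ case to all $k$. No arithmeticity of $\rho_K(\CI)$ is used anywhere.

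Your proposal, by contrast, attempts to prove the theorem by first establishing that $\rho(\CI)$ (or its restriction to each isotypic component $V_\chi$) is a lattice in a semisimple group and then invoking strong approximation and bounded generation to control commutator subgroups. This is circular with respect to the state of the art: the arithmeticity of the image is precisely the open Question \ref{q2} which the paper discusses in Section \ref{Image}, and the paper explicitly presents Theorems \ref{theorem2} and \ref{theorem3} as \emph{evidence for}, not consequences of, that conjecture. You acknowledge that $\rho(\CI)$ restricted to $V_\chi$ ``could a priori be a thin subgroup,'' but the remedy offered (showing the Zariski closure is semisimple) does not resolve the issue: a Zariski-dense subgroup of a semisimple group can perfectly well be thin, and thin groups satisfy neither the congruence subgroup property nor the commutator/bounded-generation statements your inductive step relies on. Until arithmeticity is actually established --- which would be a much stronger and currently unknown result --- the inductive step showing $[\rho(\CI_k):\rho(\CI_{k+1})] < \infty$ has no foundation. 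The paper's proof, by finding a concrete non-trivial element of $\ker\rho_K$ in $\CI \setminus \CI_2$ and exploiting irreducibility of the Johnson module together with nilpotency of $\CI/\CI_k$, sidesteps all of this.
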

 
As part of our proof of Theorem \ref{theorem2}, we answer an even more basic question whose answer was surprisingly not in the literature - namely: is any homological representation faithful? 

\begin{theorem} \label{theorem4} If $X$ is a closed surface of genus $\geq 2$ then no homological representation is faithful. 

\end{theorem}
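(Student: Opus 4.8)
The plan is to produce an explicit nontrivial element in $\ker\rho$ using the point-pushing subgroup. Since $X$ is closed of genus $\geq 2$, the group $\Gamma$ (the mapping class group of $X$ rel the marked point $\star$, that is $\tup{Mod}^1_{g,0}$) fits into the Birman exact sequence
\[
1 \longrightarrow \pi_1(X,\star) \xrightarrow{\ \tup{Push}\ } \Gamma \longrightarrow \tup{Mod}(X) \longrightarrow 1 ,
\]
in which $\tup{Push}$ is injective because $\pi_1(X,\star)$ is centerless. Moreover, with respect to the natural action $\Gamma \to \tup{Aut}(\pi_1(X,\star))$ used to construct $\rho$, the point-push $\tup{Push}(\gamma)$ acts on $\pi_1(X,\star)$ as conjugation by $\gamma$ (up to replacing $\gamma$ by $\gamma^{-1}$); this is the standard description of the point-pushing subgroup.

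First I would choose any $\gamma \in K \setminus \{1\}$, where $K \leq \pi_1(X,\star)$ is the characteristic finite-index subgroup determining the cover $f : Y \to X$; such a $\gamma$ exists since $K$ has finite index in the infinite group $\pi_1(X,\star)$. As $K$ is characteristic, hence normal, conjugation by $\gamma$ restricts to an automorphism of $K$, and since $\gamma \in K$ this restriction is an \emph{inner} automorphism of $K$. Inner automorphisms act trivially on the abelianization, so $\rho(\tup{Push}(\gamma))$ is the identity of $H_1(K,\BZ) \cong H_1(Y,\BZ)$, while $\tup{Push}(\gamma) \neq 1$ in $\Gamma$. Hence $\rho$ is not faithful. (In fact $\tup{Push}(\gamma)$ lies in $\CI$, since the entire point-pushing subgroup is contained in the Torelli group, which foreshadows Theorem~\ref{theorem2}.)

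I do not anticipate a real obstacle; the only points requiring care are: (i) verifying that the homological representation, as constructed, genuinely sends point-pushes to conjugations on $\pi_1(X,\star)$ — this is precisely why the construction passes to the mapping class group $\tup{Mod}^{n+1}$ with an extra marked point added at $\star$ — and (ii) the hypothesis genus $\geq 2$, needed so that $\tup{Push}$ is injective (for the torus every point-push is trivial and the argument collapses). Alternatively, the statement follows a posteriori from Theorem~\ref{theorem2}: were $\rho$ faithful, $\rho|_{\CI}$ would be an isomorphism onto $\rho(\CI)$ carrying $\CI_2$ onto $\rho(\CI_2)$, so that $[\rho(\CI):\rho(\CI_2)] = [\CI:\CI_2] = \infty$, contradicting Theorem~\ref{theorem2}.
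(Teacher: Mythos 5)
Your proof is correct, and while it targets the same kind of element as the paper's direct proof (a point-pushing map), the justification for why it lies in $\ker\rho$ is genuinely different. The paper's direct argument (Lemma~\ref{mainlemma}) proceeds topologically: it lifts the point-push to the cover $Y$, identifies the lift as a multi-point pushing map, and invokes the formula of Lemma~\ref{multipoint} together with the fact that $Y$ is closed to conclude the action on $H_1(Y,\BZ)$ is trivial. You instead argue purely algebraically: choose $\gamma \in K \setminus \{1\}$, observe that $\tup{Push}(\gamma)$ acts on $\pi_1(X,\star)$ by conjugation by $\gamma$, and note that since $\gamma \in K$ this restricts to an \emph{inner} automorphism of $K$ and hence acts trivially on $H_1(K,\BZ)$. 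This is shorter and avoids all of the machinery of Section~\ref{pushmaps}. It also makes the requirement $\gamma \in K$ explicit, which is a useful point of care: the lift of a point-push that fixes a chosen lift of $\star$ (the lift relevant to $\rho$) is isotopic to the identity precisely when $\gamma$ lifts to a loop, i.e., when $\gamma \in K$. What your route does \emph{not} give for free is the extra information the paper wants from its construction — namely that one can choose a point-push and a separate curve-push whose images in $\CI/\CI_2 \cong \Lambda^3 H$ hit both irreducible $\tup{Sp}(2g,\BZ)$-factors — which is exactly what is needed for Theorem~\ref{theorem2}; your $\gamma \in K$ may well have trivial image in $H_1(X,\BZ)$ and hence lie in $\CI_2$, so you would need the additional requirement $[\gamma]\neq 0$ in $H$ (which is achievable since $K$ is finite index) to recover the paper's stronger conclusion. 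Your alternative derivation of Theorem~\ref{theorem4} from Theorem~\ref{theorem2} is precisely the paper's own first remark.
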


Theorem \ref{theorem4} follows from Theorem \ref{theorem2} since $[\CI:\CI_2] = \infty$. It's also proved directly in Lemma \ref{mainlemma}. 

Our final observation is that while our theorems show that the image of one group has finite index in the other, this index need not be $1$. 

\begin{theorem}\label{theorem5}
	Let $\CI, \CI_2, \CI_3, \ldots$ be the Johnson filtration of either $\tup{Mod}(X)$ for $X$ a closed surface of genus $\geq 2$ or $\tup{Aut}(F_n)$ for $n \geq 2$. Then there exists a homological representation $\rho$ with the following property:  for any $k > 0$ there exists a number $N \geq k$ such that if $j > N$ then $[\rho(\CI_k): \rho(\CI_j)] > 1$. 
\end{theorem}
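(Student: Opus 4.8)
\emph{Strategy.} Write $\pi=\pi_1(X,\star)$ in the closed-surface case and $\pi=F_n$ otherwise; recall that the Johnson filtration is $\CI_k=\ker\!\big(\Gamma\to\tup{Aut}(\pi/\gamma_{k+1}(\pi))\big)$ for the lower central series $\gamma_\bullet$, and that $\bigcap_k\CI_k=\{1\}$. We first reduce using Theorems \ref{theorem2} and \ref{theorem3}: since these give $[\rho(\CI_k):\rho(\CI_j)]<\infty$ for all $j\ge k$ and every homological $\rho$, the conclusion of the theorem for a fixed $\rho$ is \emph{equivalent} to the assertion that the descending chain $\rho(\CI_1)\supseteq\rho(\CI_2)\supseteq\cdots$ never stabilizes --- if for each $k$ some $j>k$ has $\rho(\CI_j)\subsetneq\rho(\CI_k)$ then $N=j-1\ge k$ works by monotonicity, and conversely. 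Thus it suffices to produce one homological representation whose image chain does not stabilize; we describe the surface case, the $\tup{Aut}(F_n)$ case being parallel.

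\emph{Choice of cover.} We take $\rho=\rho_f$ with $f\colon Y\to X$ the mod-$p$ homology cover, i.e.\ $K:=\pi_1(Y)=\pi^p[\pi,\pi]$ with deck group $G=\pi/K\cong(\BZ/p)^{2g}$, for any prime $p$. The relevant elementary fact is that any $\phi\in\CI_j$ preserves $K$ and acts on $H_1(Y,\BZ)=K/[K,K]$ trivially modulo
\[
 V_j\ :=\ \gamma_{j+1}(\pi)\,[K,K]\,\big/\,[K,K]\ \subseteq\ H_1(Y,\BZ),
\]
and that the $V_j$ are exactly the terms $\gamma_{j+1}(\pi/[K,K])$ of the lower central series of the finitely generated, virtually abelian group $\pi/[K,K]$. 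Since $G$ is a finite $p$-group one checks that $V_j=I_G^{\,j-1}V_1$ (with $I_G\subset\BZ[G]$ the augmentation ideal and $V_1=[\pi,\pi]/[K,K]$), that $\bigcap_jV_j=0$ because $\BZ[G]$ is $I_G$-adically separated, and that $V_j/V_{j+1}=(V_j)_G\neq0$ for every $j$ because $\BF_p[G]$ is local with nilpotent maximal ideal. Hence $V_1\supsetneq V_2\supsetneq\cdots$ descends strictly, and each $\rho_f(\CI_j)$ lies in the congruence subgroup $\{A\in\GL(H_1(Y,\BZ)):(A-\Id)H_1(Y,\BZ)\subseteq V_j\}$, which strictly shrinks with $j$.

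\emph{Lower bound.} It remains to show the $\rho_f(\CI_j)$ genuinely reflect this refinement: for infinitely many $j$ we want $\phi_j\in\CI_j$ with $\rho_f(\phi_j)\notin\rho_f(\CI_{j+1})$. Equivalently, the ``leading term'' homomorphism $\CI_j\to\Hom\big(H_K,\,V_j/V_{j+1}\big)$, $\phi\mapsto\big(\bar x\mapsto(\phi-\Id)(x)\bmod V_{j+1}\big)$, where $H_K\le H_1(\pi,\BZ)$ is the image of $K$, must be nonzero for infinitely many $j$. We factor it through the $j$-th Johnson homomorphism $\tau_j\colon\CI_j/\CI_{j+1}\hookrightarrow\Hom\big(H_1(\pi,\BZ),\,\gamma_{j+1}(\pi)/\gamma_{j+2}(\pi)\big)$ using the natural surjection $\gamma_{j+1}(\pi)/\gamma_{j+2}(\pi)\twoheadrightarrow V_j/V_{j+1}$, so it is enough that the known image of $\tau_j$ (Johnson, Morita and Hain for surfaces; Satoh and others for $\tup{Aut}(F_n)$, $n\ge3$) is not entirely annihilated by the induced map to $\Hom(H_K,V_j/V_{j+1})$, for infinitely many $j$. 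Granting this, the finite-index statements of Theorems \ref{theorem2}--\ref{theorem3} upgrade ``$\rho_f(\phi_j)\notin\rho_f(\CI_{j+1})$'' to ``$\rho_f(\CI_j)\supsetneq\rho_f(\CI_{j+1})$'', which is the desired non-stabilization; the case $n=2$, where $\CI=\IAn$ consists of inner automorphisms, reduces directly to the lower central series of $F_2$.

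\emph{Expected main obstacle.} The crux is precisely this non-vanishing of the pushed-forward Johnson image for infinitely many $j$ --- equivalently, ruling out the conspiracy in which $\ker\rho_f\cap\CI$ is so nearly closed in the Johnson topology that the refinement recorded by the $V_j$ is washed out. We expect to settle it by working with an explicit infinite family of Torelli elements adapted to the cover --- for instance commutators of Dehn twists about separating curves that lift to $Y$ --- and computing their action on $H_1(Y,\BZ)$ directly; that computation is where the real work lies.
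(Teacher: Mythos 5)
Your proposal starts in the same place as the paper (the mod-$p$ homology cover $K=\ker(G\to H_1(G,\BZ/p))$) and your reduction to ``the chain $\rho(\CI_1)\supseteq\rho(\CI_2)\supseteq\cdots$ never stabilizes'' is fine, but from there you take a substantially more laborious route, and you leave the decisive step — the ``lower bound'' — unproved, as you yourself flag. So as it stands the proposal has a genuine gap.

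The paper does something much more economical, and the contrast is worth understanding. Instead of filtering $H_1(K,\BZ)$ integrally by the submodules $V_j=\gamma_{j+1}(\pi)[K,K]/[K,K]$ and then having to prove that the Johnson images hit these pieces nontrivially for infinitely many $j$, the paper reduces modulo $p^m$. Let $L_m=\ker(K\to H_1(K,\BZ/p^m\BZ))$. Then $G/L_m$ is a finite $p$-group, hence nilpotent of some class $d(m)$; by \emph{definition} of the Johnson filtration, $\CI_{d(m)}$ acts trivially on $G/\gamma_{d(m)+1}(G)$, hence on $G/L_m$, hence on the subgroup $K/L_m\cong H_1(K,\BZ/p^m\BZ)$. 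This says precisely that $\rho(\CI_{d(m)})$ lies inside the $p^m$-congruence subgroup of $\GL(H_1(K,\BZ))$. Since those congruence subgroups have trivial intersection, $\bigcap_j\rho(\CI_j)=\{1\}$, and the non-stabilization you wanted follows the moment one knows $\rho(\CI_k)\neq\{1\}$ (the paper fixes $g\in\CI_k$ with $\rho(g)\neq\Id$ and chooses $i$ so that $\rho(g)$ escapes the $p^i$-congruence subgroup). No Johnson homomorphisms, no analysis of $\ker\rho\cap\CI$, and only one nontrivial image per $k$ is needed, rather than a nontrivial leading term at every level $j$.

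In short: the idea you are missing is to reduce mod $p^m$ and invoke nilpotence of the finite quotient $G/L_m$, which converts ``Johnson filtration depth'' directly into ``congruence filtration depth.'' That observation replaces your entire ``lower bound'' paragraph — exactly the part you admit is unfinished and which would indeed be the hard part if you pursued your route (it would require nontrivial information about where the Johnson homomorphisms land after pushing to the deck-group module, which is far from formal). Note also that both your argument and the paper's tacitly require $\rho(\CI_k)\neq\{1\}$ for every $k$, which the paper does not spell out; and for $\tup{Aut}(F_2)$, where $\CI_k=\tup{Inn}(\gamma_k(F_2))$ and $\gamma_k(F_2)\subseteq K$ for $k\ge2$ so that $\rho|_{\CI_k}$ is trivial, this hypothesis actually fails for the mod-$p$ homology cover — a point worth being alert to, independent of which proof strategy one adopts.
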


\subsection{the image of homological representations} \label{Image}
Theorem \ref{theorem2} fits a conjectural description of the image of homological representations. Suppose $f: Y \to X$ is a finite characteristic cover with deck group $D$. Given a mapping class $\phi \in \Gamma$, any lift $\wt{\phi}$ of $\phi$ to $Y$ normalizes the deck group $D$. In particular, $\rho_f(\phi)$ normalizes $D_*$, the image of $D$ in $\tup{Sp}(H_1(Y, \BZ))$. It is natural to ask the following question: 

\begin{question} \label{q2} Is $\rho_f(\Gamma)$ a finite index subgroup of the normalizer of $D_*$ in $\tup{Sp}(H_1(Y, \BZ))$? Does a similar phenomenon hold for homological representations of $\tup{Aut}(F_n)$?
\end{question}

McMullen addressed this question in \cite{McM}. He showed that when the genus of $X$ is zero, the answer to question \ref{q2} can be negative. However, in every single known case in genus $\geq 2$, the answer to this question is positive. For example, Gr\"unewald, Larsen, Lubotzky, and Malestein showed this for the class of redundant covers of closed surfaces in \cite{GLLM} and Looijenga showed it for the class of abelian covers of closed surfaces in \cite{Looij}. The corresponding question for $\tup{Aut}(F_n)$ representations also has a positive answer in every single known case (for example, Gr\"unewald and Lubotzky proved this for the class of redundant covers in in \cite{GL}). 

Our Theorems \ref{theorem2} and \ref{theorem3} can be viewed as evidence of a positive answer to Question \ref{q2} in genus $\geq 2$ and for $F_n$ with $n \geq 3$. The normalizers that appear in the question are lattices in high rank semi-simple Lie groups. Since $\tup{Mod}(X) / \CI \cong \tup{Sp}(2g,\BZ)$, we get that a positive answer to the mapping class group portion of Question \ref{q2} implies that the image of $\CI$ is also a lattice in a high rank semi-simple Lie group. By the Margulis normal subgroup theorem, every normal subgroup of such a lattice is either finite or has finite index. In particular, all of its quotients are either finite or semi-simple. Since $\CI_k \lhd \CI$ for every $k$ and $\CI/\CI_k$ is a nilpotent group, we must have that the image of $\CI_k$ has finite index in the image of $\CI$. Thus, a positive answer to Question \ref{q2} would imply our Theorem \ref{theorem2}.

\nid \textbf{Acknowledgments} We would like to thank Dan Margalit for helpful comments about the paper. 

\subsection{Sketch of Proof of Theorem \ref{theorem2}}\label{proofsketch}

The group $\Gamma$ acts on $\CI/ \CI_2$ by conjugation. Since $\CI$ acts trivially on this quotient, $\CI/\CI_2$ becomes a $\tup{Sp}(2g,\BZ)$-module. Denote $H = H_1(X, \BZ)$. By work of Dennis Johnson, it is known that $\CI/\CI_2 \cong \Lambda^3 H$ as a $\tup{Sp}(2g,\BZ)$-representation (see \cite{Johnson1}). 

This representation decomposes as a sum of irreducibles as $\Lambda^3 H \cong H \oplus \Lambda^3H/H$. The first factor is the image of the point pushing subgroup and the second factor is the image of those elements of $\CI$ that are not in the kernel of the map $\tup{Mod}(X') \to \tup{Mod}(X)$ given by forgetting the puncture $\star$. 

In Section \ref{pushmaps} we discuss point pushing maps and a related notion called curve pushing maps. We also give an explicit description of the image of certain push maps under homological representations. In Section \ref{proof}, we use this description to construct for each homological representation $\rho$ elements $\phi, \psi \in \Gamma$ such that: 

\begin{enumerate}
\item $\phi, \psi \in \CI \setminus \CI_2$. 
\item $\phi, \psi \in \ker(\rho)$ (which immediately proves Theorem \ref{theorem4}) 
\item $\phi$ is a point pushing map that projects to the first irreducible factor of $\CI/ \CI_2$. 
\item $\psi$ is a curve pushing map that projects to the second irreducible factor of $\CI / \CI_2$. 
\end{enumerate}

\nid Since $\phi, \psi \in \ker(\rho)$, their conjugacy classes are as well. This means that the $\tup{Sp}(2g, \BZ)$ orbits in $\CI /\CI_2$ are in the kernel of the map $\CI \to \rho(\CI)/ \rho(\CI_2)$. Irreducibility now gives the result. 

For $k \geq 2$, we have that $\CI / \CI_k$ is a nilpotent group. In Lemma \ref{nilp}, we show that if $N$ is a finitely generated nilpotent group and $K \lhd N$ projects to a finite index subgroup of $N/[N,N]$ then $K$ has finite index in $N$. Applying this to $\ker(\rho)$ now gives the result. 

In Section \ref{autcase} we carry out a similar proof for the $\tup{Aut}(F_n)$ case. This case is much simpler. The representation $\Lambda^3H$ is an irreducible $\tup{SL}(n,\BZ)$-representation, so we only need to construct one map $\phi \in \CI \setminus \CI_2 \cap \ker(\rho)$. Furthermore, this map ends up being easier to construct - it's a Nielsen transformation that is simple to describe. 
\section{The Torelli group and the Johnson filtration} \label{Jfil}

Let $\Gamma$ be either $\tup{Aut}(F_n)$, or $\tup{Mod}(X')$ where $X$ is a closed surface of genus $\geq 2$ and $X'$ is obtained from $X$ by adding the puncture $\star$. Let $\pi$ be either $F_n$ or $\pi_1(X, \star)$. 

There is a natural map $\Gamma \to \tup{Aut}(\pi)$. Whenever $L \lhd \pi$ is a characteristic subgroup, we get a map $\Gamma \to \tup{Aut}(\pi/L)$. The sequence of groups $L_1 = [\pi, \pi]$, $L_{i+1} = [\pi, L_i]$ is called the \emph{lower central series of $\pi$}. All the groups $L_i$ are characteristic subgroups of $\pi$. 

The kernel of the map $\Gamma \to \tup{Aut}(\pi/L_k)$ is called the \emph{k-th term of the Johnson filtration}. We denote this kernel $\CI_k$. When $k =1$ the group is called the \emph{Torelli subgroup of $\Gamma$} and we denote it $\CI$.

We will require three standard facts about the Johnson filtration: 

\begin{enumerate}
\item $\CI / \CI_k$ is a finitely generated nilpotent group.  

\item The group $\CI/\CI_2$ is abelian and the map $\CI/ [\CI, \CI] \to \CI/\CI_2$ has finite kernel. (\cite{Johnson2}). 
\item The group $\Gamma$ acts on $\CI/\CI_2$ by conjugation. As a $\Gamma/\CI$-module, $\CI/CI_2 \cong \Lambda^3 H$, where $H = \pi/[\pi, \pi]$. 

\end{enumerate}

\section{The $\tup{Aut}(F_n)$ case} \label{autcase}

Let $n \geq 3$, $K \lhd F_n$ be a characteristic subgroup, and $\rho_K: \tup{Aut}(F_n) \to \tup{GL}(H_1(K, \BZ))$ be the corresponding homological representation. 

\begin{prop} \label{prop1}  In the notation above $[\rho_K(\CI): \rho_K(\CI_2)] < \infty$. 
\end{prop}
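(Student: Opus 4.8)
The plan is to reduce the statement to producing a single nontrivial element $\phi \in \CI \setminus \CI_2$ that lies in $\ker(\rho_K)$, and then to leverage $\tup{SL}(n,\BZ)$-irreducibility of $\CI/\CI_2 \cong \Lambda^3 H$ together with the nilpotent-group lemma (Lemma \ref{nilp}) alluded to in the sketch. Since $[\rho_K(\CI):\rho_K(\CI_2)] < \infty$ is what we want, and $\CI/\CI_2$ is a finitely generated abelian group (fact (2) of Section \ref{Jfil}), it suffices to show that the image of $\CI_2$ in $\rho_K(\CI)$ has finite index; equivalently, that $\rho_K(\CI)/\rho_K(\CI_2)$ is finite. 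The map $\CI \to \rho_K(\CI)/\rho_K(\CI_2)$ factors through $\CI/\CI_2 \cong \Lambda^3 H$, and because $\ker(\rho_K) \cap \CI$ is normalized by all of $\Gamma$, its image in $\Lambda^3 H$ is an $\tup{SL}(n,\BZ)$-submodule. Since $\Lambda^3 H$ is irreducible as an $\tup{SL}(n,\BZ)$-module (for $n \geq 3$), this image is either $0$ (up to finite index, i.e. finite index in $\Lambda^3 H$) or all of $\Lambda^3 H$; so it is enough to exhibit one element of $(\ker \rho_K \cap \CI) \setminus \CI_2$, which forces the submodule to be nontrivial, hence finite index, hence $\rho_K(\CI_2)$ has finite index in $\rho_K(\CI)$.

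So the heart of the matter is the explicit construction. First I would recall the structure of $\CI_2 = \ker(\tup{Aut}(F_n) \to \tup{Aut}(F_n/\gamma_3))$ in terms of the Johnson homomorphism: an automorphism lies in $\CI$ iff it acts trivially on $H = H_1(F_n,\BZ) \cong \BZ^n$, and in $\CI_2$ iff additionally it acts trivially on $\gamma_2/\gamma_3$. Concretely, writing $F_n = \langle x_1, \dots, x_n \rangle$, a good candidate is a Nielsen-type transformation such as $\phi: x_1 \mapsto x_1[x_2,x_3]$, fixing all other generators (using $n \geq 3$). Such a $\phi$ is clearly in $\CI$ since $[x_2,x_3]$ is a commutator, and one checks it is not in $\CI_2$ because its Johnson homomorphism image is the nonzero element $x_1^* \otimes ([x_2] \wedge [x_3]) \in H^* \otimes \Lambda^2 H$, equivalently a nonzero element of $\Lambda^3 H$ under the identification in fact (3). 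The remaining task — and the step I expect to be the main obstacle — is arranging that this $\phi$ (or a suitable variant/power/commutator of such Nielsen transformations) actually lies in $\ker(\rho_K)$, i.e. acts trivially on $H_1(K,\BZ)$ for the given characteristic $K$. The freedom we have is the choice of which commutator to insert and into which generator, and we may also take products and conjugates; the key point will be that $K$ contains some fixed finite-index subgroup determined by $K$ (e.g. $K \supseteq \gamma_m(F_n)$ or $K \supseteq F_n^{(\ell)}$ for suitable $m$ or $\ell$ depending only on $[F_n:K]$), so inserting a commutator that is "deep enough" in $F_n$ relative to $K$ will act trivially on $K/[K,K]$ while remaining outside $\CI_2$ of the ambient group.

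Concretely, to kill the action on $H_1(K,\BZ)$ I would choose $\phi$ to be a transformation of the form $x_1 \mapsto x_1 w$ with $w$ a commutator lying in $K$ (possible since $K$ is a finite-index normal, hence "large", characteristic subgroup, and we can pick $w = [a,b]$ with $a,b \in K$); one then verifies that the restriction of $\phi$ to $K$ is conjugation-like enough on $K^{\mathrm{ab}}$ to be trivial — this needs care, because even if $w \in K$, the automorphism $\phi$ need not restrict to an inner automorphism of $K$, so I would instead aim for $\phi$ whose restriction to $K$ is inner, or a product of such, by taking $w$ to be a product of conjugates of a single commutator chosen so that the effect on $H_1(K)$ telescopes to zero. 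The delicate bookkeeping is exactly this interplay between "$\phi \notin \CI_2$" (a condition on the $\Lambda^3 H$-level, i.e. on the commutator mod $\gamma_3(F_n)$) and "$\rho_K(\phi) = 1$" (a condition on $K^{\mathrm{ab}}$); making both hold simultaneously, using only $n \geq 3$ and finiteness of $[F_n:K]$, is where the real argument lies. Once $\phi$ is in hand, the module-theoretic reduction above closes the proof; if one instead wants $[\rho_K(\CI):\rho_K(\CI_k)] < \infty$ for all $k$ (Theorem \ref{theorem3}), one feeds this into Lemma \ref{nilp} applied to the finitely generated nilpotent group $\rho_K(\CI)/\rho_K(\CI_k)$, whose abelianization is a quotient of $\rho_K(\CI)/\rho_K(\CI_2)$ and hence finite.
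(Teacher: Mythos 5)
Your high-level strategy is exactly the paper's: produce one element $\phi \in (\CI \setminus \CI_2) \cap \ker(\rho_K)$, observe that the kernel of $\CI/\CI_2 \to \rho_K(\CI)/\rho_K(\CI_2)$ is an $\tup{SL}(n,\BZ)$-submodule of $\Lambda^3 H$, and invoke irreducibility plus Lemma~\ref{nilp}. You also correctly land on the right shape of $\phi$: a Nielsen-type map $x_1 \mapsto x_1 w$ with $w$ a commutator $[a,b]$ of two elements of $K$, so that $w \in [K,K]$.

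But the step you flag as ``where the real argument lies'' is in fact a genuine gap in your write-up, and the detour you propose (arranging for $\phi|_K$ to be inner, or for conjugates to ``telescope'') is both unnecessary and unlikely to close it. Once $w \in [K,K]$, the verification that $\rho_K(\phi) = \mathrm{id}$ is immediate: $[K,K]$ is characteristic in $K$, hence (as $K$ is characteristic in $F_n$) normal in $F_n$, so $\phi$ descends to an automorphism of $F_n/[K,K]$, and there the image of $w$ is trivial, so $\phi$ acts as the identity on $F_n/[K,K]$ and in particular on $K/[K,K] = H_1(K,\BZ)$. (The paper phrases this topologically, showing the induced chain map on the cover $X_0$ of the rose fixes every edge up to a null-homologous cycle, but the content is the same.) No innerness is needed. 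Separately, you gloss over the constraint that makes Claim~2 work: for $\phi \notin \CI_2$ you need $[a,b] \notin \gamma_3(F_n)$, i.e.\ $\bar a \wedge \bar b \neq 0$ in $\Lambda^2 H$, and an arbitrary pair $a,b \in K$ need not satisfy this. The paper's explicit choice $a = a_2^m$, $b = a_3^m$ with $m = [F_n:K]$ handles both constraints at once: $a_2^m, a_3^m \in K$ by Lagrange, and $\overline{a_2^m} \wedge \overline{a_3^m} = m^2\, a_2 \wedge a_3 \neq 0$. Your first candidate $x_1 \mapsto x_1[x_2,x_3]$ satisfies the $\CI \setminus \CI_2$ condition but generally fails $\ker \rho_K$, since $[x_2,x_3]$ need not lie in $[K,K]$ (or even in $K$); you identify the fix but do not actually carry it out or verify that it preserves the $\notin \CI_2$ property.
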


\begin{proof}
Let $m = [F_n : K]$, and $F_n = \langle a_1, a_2, \ldots, a_n \rangle$. For every fixed $1 \leq i \neq j \leq n$ the endomorphism $F_n \to F_n$ that sends $a_i \to a_i a_j$ and $a_k \to a_k$ for $k \neq i$ is called a \emph{Nielsen transformation}, and is well known to be an automorphism. Let $\phi: F_n \to F_n$ be given by $\phi(a_1) = a_1 [a_2^m, a_3^m]$, and $\phi(a_k) = a_k$ for $k > 1$. The endomorphism $\phi$ can be written as a product of Nielsen transformations, and is thus an automorphism. \\

\nid \underline{Claim 1:} $\phi \in \tup{Ker}(\rho_K)$. To see this claim, let $X = \bigvee \limits_1^n S^1$ be a join of $n$ circles at a point which we call $p$. Then $\pi_1(X,p) \cong F_n$, and the automorphism $\phi$ is induced by a homotopy equivalence $\varphi: X \to X$ that fixes $p$. Let $X_0 \to X$ be the finite sheeted cover corresponding to the subgroup $K < F_n$. Since $K$ is characteristic, we can lift $\varphi$ to a map $\varphi_0: X_0 \to X_0$ fixing some lift $p_0$ of $p$.  \\

\nid The spaces $X$ and $X_0$ are graphs, and are thus  also simplicial complexes. Let $C_*$ be the chain complex of simplicial chains in $X_0$. Denote by $\varphi_0^\#: C_1 \to C_1$ the map induced by $\varphi_0$. Note that $a_2^m, a_3^m \in K$. Thus, $[a_2^m, a_3^m] \in [K,K]$. Given an edge $r$ in $X_0$ that is a lift of an edge corresponding to $a_i$ for $i >1$, we have that $\varphi_0(r) = r$. Given an edge $r$ that is a lift of $a_1$, we have that $\varphi_0(r)$ is an edge path consisting of $r$ followed by a cycle whose class in $H_1(X_0)$ is trivial.  Thus $\varphi_0^\#$ is the identity map. \\

\nid Since $H_1(K)$ can be identified with the subspace of $1$-cycles in $C_1$, and $\rho_K(\phi)$ is given by restricting $\varphi_0^\#$ to this subspace, we get that $\rho_k(\phi)$ is the identity map. 

\nid \underline{Claim 2:} $\phi \in \CI \setminus \CI_2$. Note that , $a_1^{-1}(\phi(a_1)) = [a_2^m, a_3^m]$ and $a_i^{-1} \phi(a_i)$ for $i \geq 2$. Since $[a_2^m, a_3^m] \in [F_n, F_n]$ we get that $\phi \in \CI$. Since $[a_2^m, a_3^m] \notin [F_n, [F_n, F_n]]$ we get that $\phi \notin \CI_2$. 

We are now ready to complete the proof. The quotient $\CI/ \CI_2$, as a $\tup{Aut}(F_n)$ module is $\Lambda^3 H$. Since $\CI$ acts trivially on this module, $\CI / \CI_2$ is a $\tup{Aut}(F_n) / \CI \cong SL(n,\BZ)$-module. 

Let $[\phi]$ be the image of the automorphism $\phi$ constructed above in $\CI / \CI_2$. By claim 1, $[\phi]$ is in the kernel of the map $\CI / \CI_2 \to \rho_K(\CI) / \rho_K(\CI_2)$.  Thus, the entire $\tup{SL}_n(\BZ)$ orbit of $[\phi]$ is in this kernel. By claim 2, $[\phi] \neq 0$.

Since $\Lambda^3 H$ is an irreducible $\tup{SL}(n,\BZ)$ representation, we have that $\tup{SL}(n,\BZ) [\phi]$ generates a finite index subgroup of $\Lambda^3 H$. Thus $\rho_K(\CI) / \rho_K(\CI_2)$ is finite, as desired.

\end{proof}

\nid Proposition \ref{prop1} is a subcase of Theorem \ref{theorem3}, namely - the case where $k=2$. Since $\CI/ \CI_k$ is nilpotent for every $k$, Theorem \ref{theorem3} follow directly from Proposition \ref{prop1} and from the following lemma: 

\begin{lemma} \label{nilp}
Let $N$ be a finitely generated nilpotent group. Let $K \lhd N$ be a subgroup such that the image of $K$ in $N / [N,N]$ has finite index in $N / [N,N]$. Then $[N:K] < \infty$.

\end{lemma}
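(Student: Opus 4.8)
The plan is to induct on the nilpotency class $c$ of $N$. For the base case $c = 1$, the group $N$ is abelian, so $[N,N] = 1$ and the hypothesis directly says $K = N/[N,N]$ has finite index in $N$, giving the claim. For the inductive step, suppose the lemma holds for all finitely generated nilpotent groups of class less than $c$, and let $N$ have class $c \geq 2$. The natural candidate for the inductive move is to pass to the quotient by the last nontrivial term of the lower central series: let $Z = \gamma_c(N)$, which is central in $N$ and finitely generated (a finitely generated nilpotent group is Noetherian, so all its subgroups are finitely generated). Write $\bar{N} = N/Z$, which has class $c-1$, and let $\bar{K}$ be the image of $K$ in $\bar{N}$.

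First I would check that $\bar{K}$ still satisfies the hypothesis in $\bar{N}$: the commutator subgroup of $\bar{N}$ is the image of $[N,N]$, so $\bar{N}/[\bar{N},\bar{N}]$ is a quotient of $N/[N,N]$, and the image of $\bar{K}$ there is the image of the finite-index subgroup $K[N,N]/[N,N]$; finite index is preserved under taking quotients, so the hypothesis holds for $\bar{K} \lhd \bar{N}$. By the inductive hypothesis, $[\bar{N} : \bar{K}] < \infty$, which is the same as saying $[N : KZ] < \infty$. It therefore remains to bound $[KZ : K]$, equivalently to show that $Z/(Z \cap K)$ is finite, since $KZ/K \cong Z/(Z\cap K)$.

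The heart of the argument — and the step I expect to be the main obstacle — is showing $Z/(Z\cap K)$ is finite. Here I would use that $Z = \gamma_c(N)$ is generated by $c$-fold iterated commutators, and that modulo $K$ one has strong control coming from the abelianization hypothesis. Concretely: $Z$ is generated by elements of the form $[x_1, [x_2, \dots, [x_{c-1}, x_c]\dots]]$ with $x_i$ ranging over a generating set of $N$; because $K$ has finite index in $N/[N,N]$, some fixed power $x_i^{m}$ lies in $K[N,N]$ for a uniform $m$, and multilinearity of iterated commutators modulo higher terms (standard commutator calculus in nilpotent groups, using that $\gamma_{c+1}(N)=1$) shows that a fixed power of each such generator of $Z$ lies in $K$. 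Since $Z$ is a finitely generated abelian group on which a common power of each generator lands in $Z \cap K$, the quotient $Z/(Z \cap K)$ is finitely generated and annihilated by $m$, hence finite. Combining $[N:KZ] < \infty$ with $[KZ:K] < \infty$ gives $[N:K] < \infty$, completing the induction.

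A cleaner packaging of the last step, which I would consider using instead, is to invoke the fact that in a finitely generated nilpotent group $N$ every subgroup of finite index in the abelianization pulls back to a finite-index subgroup: one can prove this directly by noting that the Frattini-type argument shows $K[N,N] = N$ up to finite index forces $K$ to be finite index, but carrying this out still reduces to exactly the commutator-power computation above, so the inductive formulation is the most transparent route.
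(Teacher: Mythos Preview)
Your proposal is correct and follows essentially the same approach as the paper: induct on the nilpotency class, use multilinearity of iterated commutators together with the finite-index hypothesis in the abelianization to control the last term $\gamma_c(N)$ modulo $K$, and combine this with the inductive hypothesis on $N/\gamma_c(N)$. The only cosmetic difference is the final packaging: you bound $[N:KZ]$ and $[KZ:K]$ separately and multiply, whereas the paper instead shows every element of $N$ has a uniformly bounded power in $K$ and then invokes the fact that a finitely generated nilpotent group of bounded exponent is finite.
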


\begin{proof}
Denote by $N_i$ the $i$-th term in the lower central series of $N$. Let $a_1, \ldots, a_m \in N$ be a set of elements that project to a free generating set of $N/[N,N]$.  For elements $g_1, \ldots, g_k \in N$, denote by $[g_1, \ldots, g_k]$  the repeated commutator  $[g_1, [g_2, \ldots, g_k] \ldots ]$. The following two facts are standard: 

\begin{enumerate}
\item The group $N_1/N_{i+1}$ is an abelian group generated by the images of all elements of the form $[x_1, \ldots, x_i]$ where $x_1, \ldots, x_i \in \{a_1, \ldots, a_n \}$. 
\item The map $N^i \to N_i /N_{i+1}$ given by $(x_1, \ldots, x_i) \to [x_1, \ldots, x_i]$ is a homomorphism in each coordinate. 
\end{enumerate}

\nid We now proceed by induction on the length of the lower central series of $N$. The lemma is obviously true for $N$ abelian. Suppose it is true for $N$ with a lower central series of length $i$. The multi linearity of the repeated commutator, together with our inductive hypothesis give that $[N_i: N_i \cap K] < \infty$. 

Given any $x \in N$, the inductive hypothesis gives that there exists a number $l$ such that the image of $x^l$ in $N / N_{i-1}$ is contained in the image of $K$ in this group. This means that there is a $y$ such that $y^l \in K$ and $xy^{-1} \in N_i$. Write $x = yh$. Since $[N_i: N_i \cap K] < \infty$, there is a $m$ such that $h^m \in K$. Since $N_i$ is central in $N$, we have that $(yh)^{lm} = (y^{l})^m (h^m)^l \in K$. A finitely generated nilpotent group of bounded exponent is finite, which concludes the proof.  

\end{proof}

\section{Pushing maps} \label{pushmaps}
\subsection{Point pushing and curve pushing}

\nid We begin by recalling a standard theorem from differential topology: 

\begin{theorem} (\textup{The isotopy extension theorem})
Let $M$ be a compact manifold (possibly with boundary) and $N$ a boundary less sub manifold. Let $H: N \times [0,1] \to M$ be a smooth homotopy such that $H_t(x) = H(x,t): N \to M$ is an embedding for each $t$ and $H_0$ is the inclusion of $N$ into $M$. Then $H$ can be extended to a smooth isotopy $\wt{H}: M \times [0,1] \to M$ where $\wt{H}_t(x) = \wt{H}(x,t): M \to M$ is a diffeomorphism for each $t$, and $\wt{H}_0$ is the identity map. 
\end{theorem}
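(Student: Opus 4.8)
The plan is to realize $\wt H$ as the flow of a suitably chosen time-dependent vector field on $M$, via the standard ``suspension'' trick. I may as well assume $N$ is compact and contained in the interior of $M$ --- the only case used later (where $N$ is a point or a simple closed curve) --- since compactness of $N$ is what drives the argument, and placing $N$ in the interior will let the output fix $\partial M$ pointwise.

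First I would suspend the isotopy: set $F\colon N \times [0,1] \to M \times [0,1]$, $F(x,t) = (H(x,t), t)$. As each $H_t$ is an embedding and $F$ is level-preserving, $F$ is an injective immersion, hence (by compactness of $N \times [0,1]$) an embedding; let $\Sigma = F(N \times [0,1])$, a compact submanifold of $M \times [0,1]$ disjoint from $\partial M \times [0,1]$. Along $\Sigma$ we have the tangent vector field $F_*(\partial_t)$, whose value at $(H(x,t),t)$ is $\big(\tfrac{\partial H}{\partial t}(x,t),\,1\big) \in T_{H(x,t)}M \times \BR$; in particular its $[0,1]$-component is identically $1$.

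Next I would extend this to a global vector field of the right shape. Extend $F_*(\partial_t)$ from $\Sigma$ to a smooth vector field on a neighborhood $U$ of $\Sigma$ in $M \times [0,1]$ with $\overline{U}$ disjoint from $\partial M \times [0,1]$ (a routine partition-of-unity argument), and split it as $\widetilde W + c\,\partial_t$ using the product splitting $T(M\times[0,1]) = TM \oplus T[0,1]$, where $\widetilde W$ is tangent to the $M$-factors; note $\widetilde W = \tfrac{\partial H}{\partial t}$ and $c \equiv 1$ along $\Sigma$. Pick a bump function $\lambda\colon M\times[0,1] \to [0,1]$ with $\lambda \equiv 1$ on $\Sigma$ and $\mathrm{supp}\,\lambda \subset U$, and set $V := \partial_t + \lambda\,\widetilde W$. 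This is a globally defined smooth vector field on $M \times [0,1]$ whose $[0,1]$-component is identically $1$, which agrees with $F_*(\partial_t)$ along $\Sigma$ (since $\lambda \equiv 1$ there), and which has vanishing $M$-component in a neighborhood of $\partial M \times [0,1]$.

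Finally I would integrate $V$. Since $M$ is compact, the flow line of $V$ through any point of $M \times \{0\}$ is defined for $t \in [0,1]$ and, advancing through time at unit speed, reaches $M \times \{1\}$ at time $1$; let $\theta_t$ denote the resulting time-$t$ flow map. Then $\theta_t$ carries $M \times \{0\}$ diffeomorphically onto $M \times \{t\}$, and composing with the canonical identification $M \times \{t\} \cong M$ defines a diffeomorphism $\wt H_t\colon M \to M$ with $\wt H_0 = \id$; smooth dependence of flows on initial conditions makes $\wt H\colon M\times[0,1]\to M$ a smooth isotopy. Because $V$ is tangent to $\Sigma$ and restricts there to $F_*(\partial_t)$, the flow line through $(x,0)$ for $x\in N$ is exactly $t\mapsto (H(x,t),t)$, so $\wt H_t|_N = H_t$; and because $V$ has vanishing $M$-component near $\partial M \times [0,1]$, each $\wt H_t$ fixes $\partial M$ pointwise. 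The only step needing real care is the vector-field extension --- producing an honest global vector field that still advances through time at unit speed yet matches the prescribed motion exactly along $\Sigma$ --- which is precisely what the tubular neighborhood (partition of unity) together with the bump function $\lambda$ accomplishes; the rest is formal.
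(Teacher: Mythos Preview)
Your argument is the standard flow-based proof of the isotopy extension theorem and is correct as written: the suspension $F$ is a proper embedding, the extension of $F_*(\partial_t)$ via a tubular neighborhood and bump function produces a global vector field whose $[0,1]$-component is identically $1$, and integrating gives the desired ambient isotopy. The added observation that $\wt H_t$ fixes $\partial M$ pointwise (since $V$ has trivial $M$-component near the boundary) is a nice bonus relevant to the later applications.

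There is nothing to compare against in the paper itself: the theorem is quoted there as a classical result from differential topology and is not proved. So your proposal supplies a proof where the paper gives none, and the route you take is exactly the textbook one (e.g., Hirsch or Palais). One minor remark: you silently add the hypothesis that $N$ is compact, which is genuinely needed for this argument to go through (to get $F$ proper and to guarantee completeness of the flow); the paper's statement omits compactness of $N$, but as you note, the only cases used downstream have $N$ a point or a simple closed curve, so no harm is done.
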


Extensions of homotopies $H$ that have the added property that $H_1$ is the identity map on $N$ allow us to construct several interesting families of mapping classes, which we will use to mimic the proof of Theorem \ref{theorem3} for mapping class groups. 

The first such family is very well known. Suppose $M$ is a surface, and $p \in M$ is a point. Let $N = \{ p\}$. A homotopy H satisfying the condition $H_0 = H_1 = \tup{Id}$ is just a closed curve $\gamma$ that is based at $p$. Let $\wt{H}$ be the extended isotopy. Let $M_0 = M \setminus \{ p\}$.  The diffeomorphism $\wt{H}_1$ can be restricted to $M_0$. This restriction is known as the \emph{point pushing map about the curve $\gamma$}.  By construction $\wt{H}_1$ is isotopic to $\wt{H}_0 = \tup{Id}$ as maps from $M \to M$. However, if $\gamma$ is not null-homotopic then their restrictions are not isotopic as maps $M_0 \to M_0$. 

Now, suppose $N \subset M$ consists of a finite set of points $N = \{p_1, \ldots, p_r\}$ and once again assume that $H_0 = H_1$. This means that each point $p_i$ traces a closed curve $\gamma_i$. We call the restriction of the diffeomorphism $\wt{H}_1$ to $M_0 = M \setminus M$ a \emph{multi-point pushing map about the curves $\gamma_1, \ldots, \gamma_r$}. 

Let $X$ be a surface and $\delta$ a non-peripheral, non-separating simple closed curve. Let $X_0$ be the surface obtained from $X$ by cutting along $\delta$. The surface $X_0$ has two more boundary components (which we call $\delta_1, \delta_2$) than $X$, and its genus is one less than the genus of $X$. Let $\overline{X}$ be the surface obtained from $X_0$ by gluing a disk to $\delta_1$, and adding a marked point $p$ in this disk. 

Choose a closed curve $\gamma$ in $\overline{X}$ that is based at $p$. The curve $\gamma$ gives a homotopy of embeddings of the point $p$ into $\overline{X}$. Extend this homotopy to a isotopy $\wt{H}: \overline{X} \times [0,1] \to \overline{X}$. If $B$ is a sufficiently small ball centered at $p$, we can modify $\overline{H}$ so that $\overline{H}_1$ is the identity map when restricted to $B$. 

Let $\overline{f} = \wh{H}_1$. The surface $X$ can be obtained from $\overline{X}$ by removing the interior of $B$, and gluing $\partial B$ to $\delta_2$. Since $\overline{f}$ fixes $\partial B$ and $\delta_2$ point wise, it defines a map $f: X \to X$, which we call a \emph{curve pushing map}. We say that $f$ \emph{pushes the curve $\delta$ along the curve $\gamma$}. 

Note that $\gamma$ is not a closed curve in $X$, but we can think of it as a closed curve in $(X, \delta)$. Throughout our discussion, when we refer to $\gamma$ as a closed curve we mean it in this sense. 

We can make a similar definition when the curve $\delta$ is separating. Suppose it separates $X$ into two components, $X_0, X_1$. We choose one of them, say $Z_0$ and glue in a disk to $X_0$ along $\delta$ equipped with a marked point $p$. We pick a closed curve $\gamma$ in $X_0$ that is based at $p$ and proceed as before. 

By replacing the curve $\delta$ with a multi-curve (a finite collection of mutually disjoint simple closed curves) we can define a \emph{mutli-curve pushing map}.

\subsection{The image of push maps under homological representations}
\subsubsection{The action of point and multi-point pushing maps on homology}

Let $X$ be a surface with at least one puncture. Let $\phi \in \tup{Mod}(X)$ be a point pushing map about the closed curve $\gamma$ which is based at the puncture $p$ and whose homology class (relative to the puncture $p$) we denote by $c$. Let $d$ be the homology class of a small positively oriented loop about the puncture $p$. If the surface $X$ has only one puncture, then every point pushing map acts trivially on $H_1(X, \BZ)$. This no longer holds when $X$ can have multiple punctures.

Suppose first that $\gamma$ is a simple closed curve. Let $\alpha$ be a closed curve in $X$ with homology class $a$. The curve $\phi(\alpha)$ is obtained from $\alpha$ in the following way. For every intersection of $\gamma$ and $\alpha$, two strands that run alongside  $\gamma$ in opposite orientations are attached to $\alpha$ using the surgery depicted in the figure $1$. If we use $[\cdot]$ to denote the homology class of a curve, and $\wh{i}(\cdot, \cdot)$ to denote the oriented intersection pairing we get that: 

$$\phi(a) = a + \wh{i}(a,c)d $$

\begin{figure}[h]
	\begin{center}
		\includegraphics[width=1\textwidth]{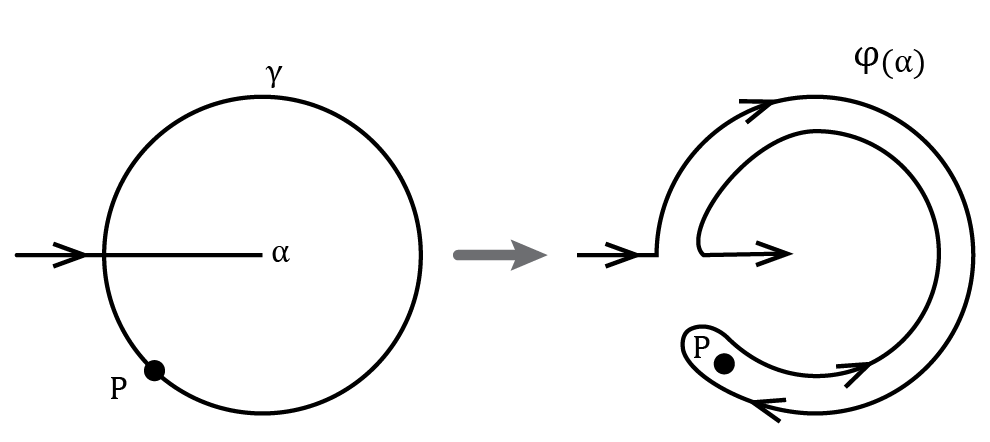}
	\end{center}
	\caption{}
\end{figure}

Suppose now that the curve $\gamma$ has self intersections. Perform the surgery described above by attaching two copies that run along $\gamma$ in opposite orientations to each intersection point. Label the intersection points cyclically by $x_1, \ldots, x_s$. At the intersection point $x_i$ add $2i$ strands ($i$ in each orientation) parallel to the portion of $\gamma$ that exits the intersection the second time it pases through it. To find the image $\phi(c)$, at each self intersection of $\gamma$ perform the surgery described in Figure $2$. At each such intersection there are an equal number of positively oriented and negatively oriented strands parallel to $\gamma$ in each direction. Thus, even if $\gamma$ has self intersection, the same formula holds: 

$$\phi(a) = a+ \wh{i}(a,c )d $$

\begin{figure}[h]
	\begin{center}
		\includegraphics[width=1\textwidth]{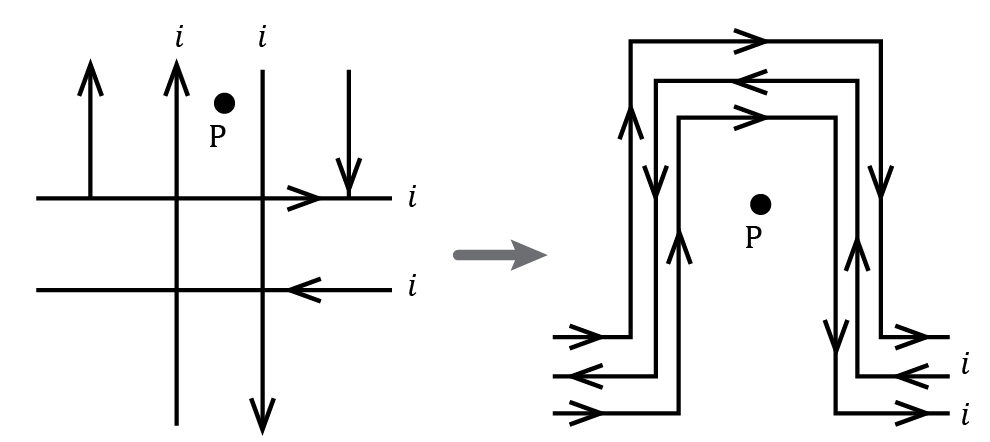}
	\end{center}
	\caption{}
\end{figure}

Now suppose that $\phi$ is a multi-point push map, in which the punctures $p_1, \ldots, p_r$ are pushed about the curves $\gamma_1, \ldots, \gamma_r$ (whose homology classes we denote by $c_1, \ldots, c_r$). Let $d_1, \ldots, d_r$ be small, positively oriented loops about the punctures $p_1, \ldots, p_r$. The same argument that we used for the point pushing map gives a similar description (this is illustrated in Figure $3$). 

\begin{lemma} \label{multipoint}
	In the notation above, for every curve $\alpha$, we have that: 
	$$\phi(a) = a + \sum_{i=1}^r\wh{i}(a,c_i )d_i $$
\end{lemma}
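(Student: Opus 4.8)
The plan is to reduce the statement to the single-puncture case already established, using the fact that the isotopy realizing $\phi$ can be localized near $\gamma_1 \cup \cdots \cup \gamma_r$. By the isotopy extension theorem one may take the extension $\wt H$ of the defining homotopy so that $\wt H_t$ is the identity outside an arbitrarily small regular neighborhood $\CN$ of $\gamma_1 \cup \cdots \cup \gamma_r$ and, inside $\CN$, drags each $p_i$ exactly once around $\gamma_i$. After an isotopy I may assume $\alpha$ is in general position with respect to $\bigcup_i \gamma_i$, so that $\alpha \cap \gamma_i$ and the self-intersections of the curves $\gamma_i$ are finitely many transverse double points. Then, exactly as in the single-puncture discussion, $\phi(\alpha)$ is obtained from $\alpha$ by performing, inside $\CN$, the surgery of Figure~1 at each point of $\alpha \cap \gamma_i$ and the surgery of Figure~2 at each self-intersection of each $\gamma_i$.

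Next I would read off the homology class. As in the single-puncture computation, the strands spliced into $\alpha$ along a fixed $\gamma_i$ occur in oppositely oriented parallel pairs at every self-intersection of $\gamma_i$ and hence cancel in $H_1$; what survives of the surgery along $\gamma_i$ is a collection of small loops around $p_i$ whose signed total is the algebraic intersection number $\wh i(a, c_i)$, so this surgery changes the homology class by exactly $\wh i(a, c_i)\, d_i$. Since the neighborhoods of distinct $\gamma_i$ can be taken disjoint except near the finitely many points where two of them cross, and at such a crossing the two surgeries operate on disjoint families of strands, the surgeries belonging to different $\gamma_i$ do not interact, and their effects on $H_1(X,\BZ)$ add. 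Summing over $i$ yields $\phi(a) = a + \sum_{i=1}^{r} \wh i(a, c_i)\, d_i$.

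The delicate point is this last independence claim, and the cleanest way to make it airtight is via naturality. The map $\phi$ is the value of the Birman point-pushing homomorphism $\pi_1(\tup{Conf}_r(M)) \to \tup{Mod}(X)$ --- where $M$ is the surface obtained from $X$ by filling in $p_1, \dots, p_r$ and $\tup{Conf}_r$ denotes the ordered configuration space --- on the loop whose $i$-th strand traces $\gamma_i$. Post-composing with $\tup{Mod}(X) \to \GL(H_1(X,\BZ))$ produces transformations of the form $a \mapsto a + \sum_i n_i d_i$, and any two of these commute, since each of them kills every $d_j$: indeed $\wh i(d_j, c_i) = 0$ for all $i, j$, because for $i \ne j$ one may realize $d_j$ by a loop disjoint from $\gamma_i$, while $\gamma_i$ meets a small loop around $p_i$ in two points of opposite sign. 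Hence the image is abelian and the homomorphism factors through $H_1(\tup{Conf}_r(M);\BZ)$; there the braiding classes act trivially --- a point pushed once around another point traverses a small loop about a puncture, and $\wh i(a, \cdot)$ annihilates the class of such a loop --- so one is left on each strand with a copy of $H_1(M;\BZ)$ on which the action is the single-puncture transvection $a \mapsto a + \wh i(a, c_i)\, d_i$, and the class of $\phi$ records $\gamma_i$ on the $i$-th strand. Therefore $\phi$ acts on $H_1(X,\BZ)$ by the composite of these $r$ commuting transvections, which, since each fixes every $d_j$, is precisely $a \mapsto a + \sum_{i=1}^{r}\wh i(a, c_i)\, d_i$.
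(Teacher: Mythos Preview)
Your argument is correct. The paper's own justification of this lemma is a single sentence---``The same argument that we used for the point pushing map gives a similar description (this is illustrated in Figure~3)''---so your first two paragraphs already match and in fact exceed what the paper supplies: you spell out the local surgery picture and identify the one point (interaction of strands at crossings of distinct $\gamma_i$) that the paper leaves to the figure.

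Your third paragraph takes a genuinely different and more structural route than the paper. Rather than resolving the crossings pictorially, you invoke the Birman point-pushing map $\pi_1(\tup{Conf}_r(M))\to\tup{Mod}(X)$, observe that the single-puncture formula forces every generator to act by a transvection $a\mapsto a+\wh i(a,c)\,d_i$, and use that the $d_j$ lie in the radical of the intersection form (so $\wh i(d_j,c_i)=0$) to conclude these transvections commute and the braiding generators act trivially. This lets you factor the action through $H_1(\tup{Conf}_r(M))$, where the multi-push and the product of single pushes agree. The payoff is that you never have to track strands through mutual crossings of the $\gamma_i$; the cost is importing the surface braid group machinery, which the paper avoids entirely. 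One small expository point: when you assert that the image consists of transformations of the form $a\mapsto a+\sum_i n_i d_i$, it is cleanest to say this first for the generators (via the already-proved single-puncture case) and then note that such maps form an abelian subgroup, rather than appearing to assume the conclusion for arbitrary elements.
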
 

\begin{figure}[h]
	\begin{center}
		\includegraphics[width=1\textwidth]{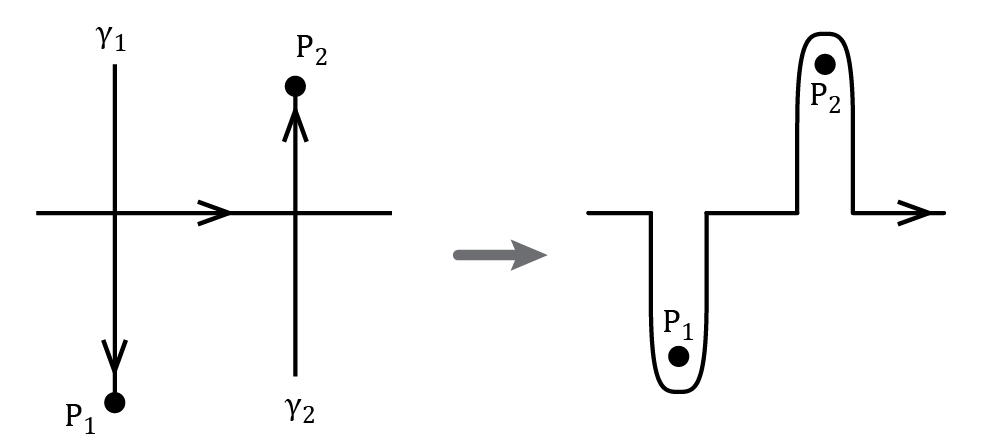}
	\end{center}
	\caption{}
\end{figure}

\subsubsection{The action of curve and multi-curve pushing maps on homology}
Suppose that $\phi$ is the map given by pushing the curve $\delta$ (whose homology class we denote by $d$) along the curve $\gamma$. For any curve $\alpha$ that does not intersect $\delta$, the same picture as the point pushing case holds here (this is illustrated in Figure $4$). We get that once again, $\phi(a) =a + \wh{i}(a,c ) d $. 

\begin{figure}[h]
	\begin{center}
		\includegraphics[width=1\textwidth]{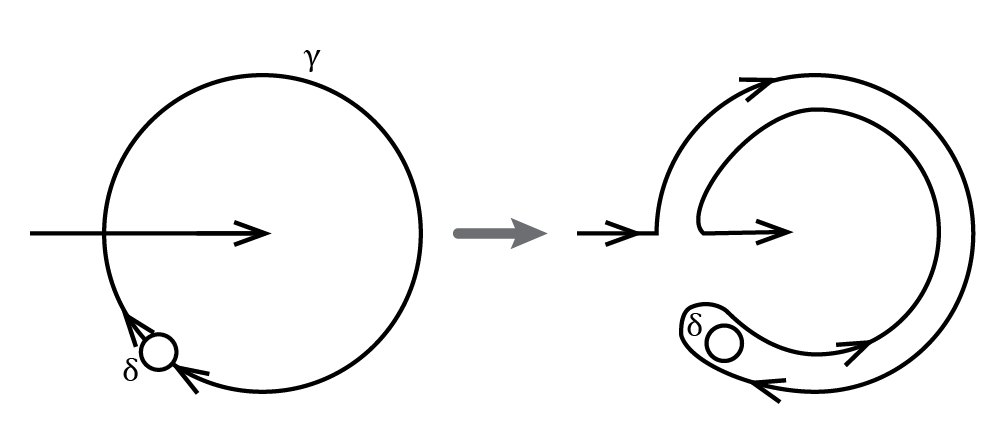}
	\end{center}
	\caption{}
\end{figure}

In the curve pushing case we have an added complication: the curve pushing map is defined by cutting $X$ along $\delta$ to form the surface $X_0$, pushing along the curve $\gamma$ in $X_0$, and regluing two copies of $\delta$ to re-form the surface $X$. As we see in figure $4$, calculating $\phi(c)$ is very similar to the the point pushing case, as long as $\alpha$ is a closed curve in $X_0$. We need to separately determine $\phi(\alpha)$ for arcs in $X_0$ whose endpoints lie on the two different copies of $\delta$. Let $\delta_1, \delta_2$ be those copies. We need to calculate the action of $\phi$ on $H_1(X, \delta_1 \cup \delta_2)$. 

Suppose first that $\gamma$ is a simple closed curve and $\alpha$ is an arc, one of whose endpoints lies on $\delta_1$. To obtain $\phi(\alpha)$ we add to $\alpha$ one strand parallel to $\gamma$ to $\alpha$ using the surgery depicted in figure $5$.  

\begin{figure}[h]
	\begin{center}
		\includegraphics[width=1\textwidth]{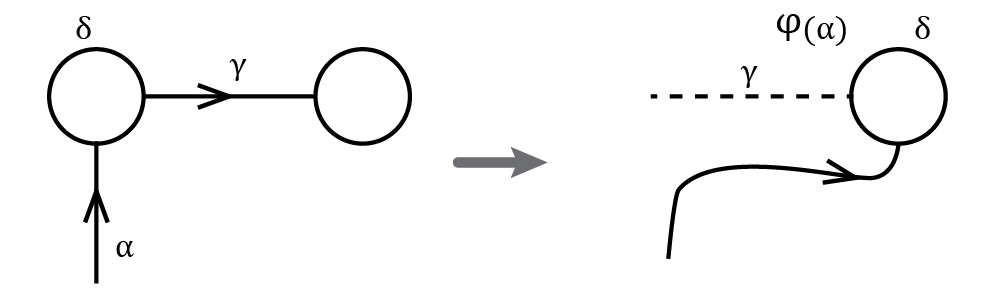}
	\end{center}
	\caption{}
\end{figure}

Now suppose that $\gamma$ has self intersections: $x_1, \ldots, x_s$ arranged from the beginning of $\gamma$ till its end. To find $\phi(\alpha)$ add strand parallel copy to $\gamma$ as above. Then, following the order $x_1, \ldots, x_s$, at $x_i$ add two parallel in opposite direction along the section of $\gamma$ exiting the intersection for the second time. Then perform the surgery described in Figure $6$. 

\begin{figure}[h]
	\begin{center}
		\includegraphics[width=1\textwidth]{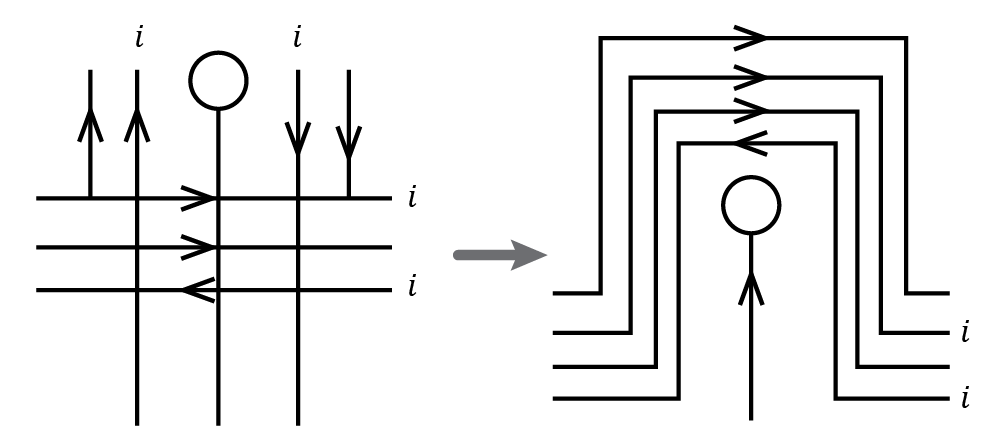}
	\end{center}
	\caption{}
\end{figure}

As opposed to the point pushing case, it is no longer the case that at each intersection there are an equal number of positively oriented and negatively oriented strands parallel to $\gamma$ in each direction. For each $x_j$, let $I_j = \wh{i}(e_j, f_j)$ where $e_j$ is the portion of $\gamma$ that hits the intersection first, and $f_j$ is the portion of $\gamma$ that hits the intersection second. Let $I_\gamma = \sum I_j$. This number determines how many copies of $d$ are added at the $j$-th intersection.  

Now let $\alpha$ be a closed curve in $X$. Putting the descriptions above together, we get that $$\phi(a) = a + \wh{i}(a, c) d + \wh{i}(a,d) \big( c + I_\gamma d \big) $$

Now let $\delta_1, \ldots, \delta_r$ be pairwise disjoint simple closed curves with homology classes $d_1, \ldots, d_r$ and take $\phi$ to be the multi-curve pushing map that pushes $\delta_j$ about $\gamma_j$ (whose homology class is denoted $c_j$). We get the following.

\begin{lemma} \label{multicurve}
	In the notation above, for any curve $\alpha$ in $X$: $$\phi(a) = a + \sum_j \wh{i}(a, c_j) d_j + \wh{i}(a, d_j) \big(c_j + I_{\gamma_j} d_j \big) $$
\end{lemma}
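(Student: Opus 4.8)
The plan is to reduce the multi-curve case to the single-curve computation already carried out above, by showing that on $H_1$ the simultaneous push $\phi$ decomposes additively: if $\phi_j$ denotes the single-curve push of $\delta_j$ along $\gamma_j$, then $\phi_*(a) = a + \sum_j \big( \phi_{j*}(a) - a \big)$. Granting this, substituting the single-curve formula $\phi_{j*}(a) = a + \wh{i}(a, c_j) d_j + \wh{i}(a, d_j)\big( c_j + I_{\gamma_j} d_j \big)$ for each $j$ yields the asserted identity immediately.

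To establish the additive decomposition, first observe that since $\delta_1, \ldots, \delta_r$ are pairwise disjoint, we may cut $X$ along the whole multicurve at once, obtaining $X_0$ with boundary pairs $(\delta_j^1, \delta_j^2)$; capping each $\delta_j^1$ with a once-marked disc (marked point $p_j$), pushing the points $p_1, \ldots, p_r$ simultaneously along $\gamma_1, \ldots, \gamma_r$, and regluing realizes $\phi$. Next, fix a closed curve $\alpha$ in general position with respect to $\bigcup_j \delta_j$ and decompose $\alpha \cap X_0$ into closed loops and properly embedded arcs; the signed number of arc endpoints on $\delta_j^1$ is $\wh{i}(a, d_j)$. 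Now run the surgery description of $\phi(\alpha)$: the effect of the simultaneous multi-point push on the closed-loop pieces is governed by Lemma \ref{multipoint} and contributes $\sum_j \wh{i}(a, c_j) d_j$ in total, while each arc endpoint lying on some $\delta_j^1$ is dragged once around $\gamma_j$, contributing a parallel copy of $\gamma_j$ together with the same self-intersection correction $I_{\gamma_j} d_j$ analyzed in the single-curve case (cf.\ Figures $5$ and $6$); summing over all arc endpoints gives $\sum_j \wh{i}(a, d_j)\big( c_j + I_{\gamma_j} d_j \big)$. Adding the two types of contribution produces exactly the additive formula.

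The step I expect to require the most care is checking that no term is produced to second order: a parallel copy of $\gamma_j$ attached to an arc during the surgery may itself cross some $\delta_k$ or $\gamma_k$ with $k \neq j$, and one must confirm that this creates no further $\gamma_k$-parallel strands. This is where it is essential that $\phi$ is a single diffeomorphism whose surgery description is applied only once, and that the pushes along the various $\gamma_j$ happen simultaneously rather than in succession — the latter would introduce precisely such cross terms, as one sees by composing the single-curve formulas and noting that $\phi_{k*}$ does not fix $c_j$ or $d_j$ when $\gamma_k$ meets $\delta_j$. Making this precise, for instance by shrinking the supporting isotopies to small neighbourhoods of $\bigcup_j \delta_j$ or by arguing directly in the multi-point model where Lemma \ref{multipoint} already encodes the absence of cross terms, is the technical heart of the argument; the remainder is the same homological bookkeeping used for a single curve.
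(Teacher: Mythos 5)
Your proposal follows essentially the same route as the paper: cut along the multicurve, decompose $\alpha$ into closed loops and arcs in the cut surface, apply the multi-point-pushing formula (Lemma \ref{multipoint}) to the loop pieces, and account for the arc endpoints on the $\delta_j$ separately, summing the per-$j$ contributions. The paper states the multi-curve formula as a direct generalization of its single-curve computation without further comment, so your explicit verification that the simultaneous push produces no second-order cross terms (which indeed holds because each $\gamma_j$ lives in the fully cut surface, disjoint from the other $\delta_k$, and the surgery is performed once) is added care rather than a genuinely different argument.
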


\subsection{Lifts of push maps to covers}

Point, multi-point, curve, and multi-curve pushing maps can have very complicated actions on the homology of covers of a surface.  For instant, the point pushing subgroup of $X$ contains many pseudo-Anosov elements (a point pushing map is pseudo-Anosov whenever the pushing curve fills the surface). By \cite{eigoffuc}, for any pseudo-Anosov element of the point pushing subgroup there is a finite cover $Y \to X$ to which $f$ lifts, such that the action of the lift of $f$ on $H_1(Y, \BZ)$ has eigenvalues off the unit circle. As a further example, by \cite{nonsolv} there is a finite cover $Y \to X$ where the image of the entire point pushing subgroup under the homological representation is non-solvable. 

This complexity is not immediately apparent from the descriptions given in Lemmas \ref{multipoint} and \ref{multicurve}. Part of the issue is that the lift of a point (resp. curve) pushing map to a cover need not be a point (resp. curve) or even a multi-point (resp. multi-curve) pushing map. In the point pushing case this is caused by the fact that if $Y \to X$ is a finite cover and $p$ is a puncture of $X$ then the covering map may be branched over $p$. If $p$ is pushed about the curve $\gamma$, and $\wt{p}$ is a lift of the puncture $p$ in $Y$, there may not be only one lift of the curve $\gamma$ originating at $\wt{p}$

Nevertheless, sometimes lifts of push maps to finite covers are themselves push maps, and their images under homological representations can thus be described by Lemmas \ref{multipoint} and \ref{multicurve}. 

We begin by describing a sufficient criterion for this to happen in the curve pushing case. Let $\delta$ be a simple closed curve in $X$. Identify $\delta$ with $\BR / \BZ$. Let $\gamma$ be a curve in $X$ originating at a point in $\delta$ corresponding to $p \in \BR/\BZ$ and terminating at the point $q = p + \frac{1}{2}$. 

Let $Y \to X$ be a regular finite cover. Let $\sigma$ be the element of the deck group of $Y\to X$ corresponding to $\delta$, and let $s$ be its order. Suppose that $\delta^s$ lifts to $Y$ for some $s$. Fix a lift $\wt{\delta}$ of $\delta^s$ and let $\wt{p_0}, \ldots, \wt{p_{s-1}}, \wt{q_0}, \ldots, \wt{q_{s-1}}  \in \BR/s\BZ$ be the lifts of $p$, $q$ in this $\wt{\delta}$ arranged cyclically. Lift the curve $\gamma$ to $\wt{\gamma}$ originating at $p_0$. Suppose this curve terminates at $\wt{q_j}$. 

We say that $\gamma$ satisfies the \emph{cyclic generation criterion} if the integer $j$ generates the group $\BZ/s\BZ$. If this is the case, then: $\bigcup\limits_{i=0}^{s-1}\sigma^i \wt{\gamma}$ is a $\langle \sigma \rangle$ invariant set consisting of a single curve which we call $\wt{\gamma}^s$.This curve passes through all the lifts $\wt{p_0}, \ldots, \wt{p_{s-1}}, \wt{q_0}, \ldots, \wt{q_{s-1}}$. The curve $\wt{\gamma}^s$ is simply a lift of $\gamma^s$ to $Y$, and by construction there is a unique such lift incident at the curve $\wt{\delta}$. Since $Y$ is regular, this is the case at every other lift of $\delta^s$.

Let $\phi \in \tup{Mod}(X)$ be the push map given by pushing $\delta$ about $\gamma^s$. By construction, the lift of $\phi$ to $Y$ is simply the multi-curve pushing map that pushes each lift of $\delta$ about each lift of $\wt{\gamma}^s$. Figure $7$ shows that if $\gamma$ does not satisfy the cyclic generation criterion there can be multiple lifts of the curve $\gamma^s$ at the loop $\wt{\delta}$ and hence the lift of $\phi$ is not a multi-curve pushing map. 

\begin{figure}[h]
	\begin{center}
		\includegraphics[width=1\textwidth]{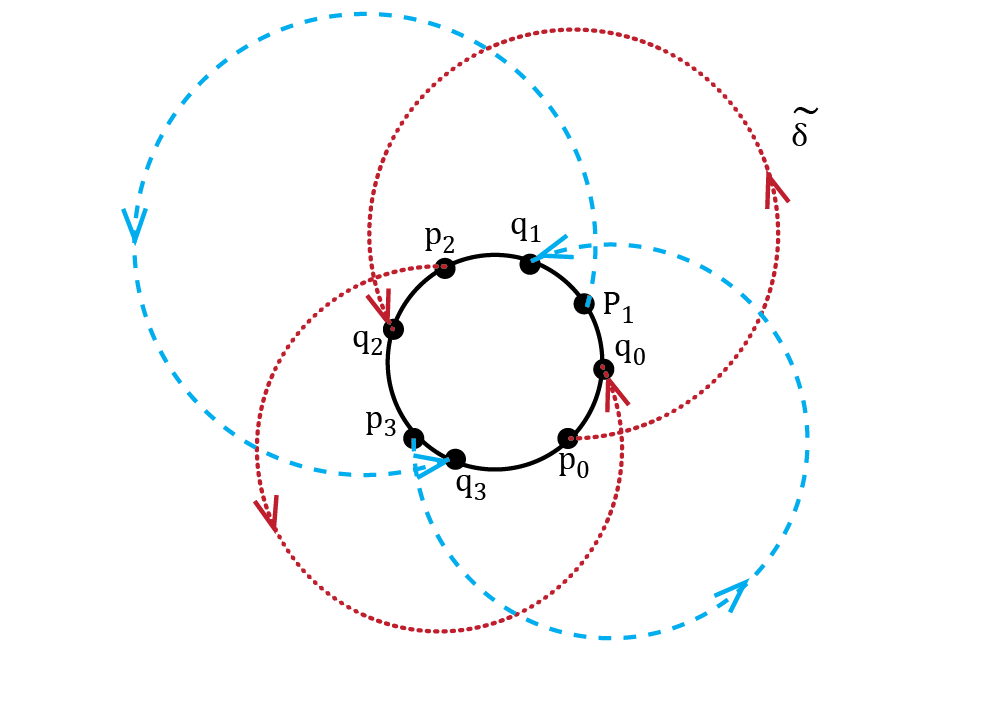}
	\end{center}
	\caption{}
\end{figure}

\section{proof of Theorem \ref{theorem2}} \label{proof}
We are now ready to construct push maps that mimic the properties of the map constructed in the proof of Theorem \ref{theorem3}. Let $X$ be a closed surface of genus $\geq 2$, and $X' = X \setminus \{\star\}$ for some basepoint $\star$. Let $\Gamma = \tup{Mod}(X')$. Let $Y \to X$ be a characteristic cover. Let $\rho$ be the corresponding homological representation.

\begin{lemma} \label{mainlemma}  There exist maps $\phi, \psi \in (\CI \setminus \CI_2) \cap \ker \rho$ such that $\phi$ is a point pushing map and $\psi$ is a curve pushing map whose image in $\tup{Mod}(X)$ is non-trivial. 
\end{lemma}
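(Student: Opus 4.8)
The plan is to build $\phi$ and $\psi$ separately, exploiting the two irreducible $\mathrm{Sp}(2g,\mathbb{Z})$-summands of $\mathcal{I}/\mathcal{I}_2 \cong \Lambda^3 H$, and in each case to arrange that the relevant pushing curve becomes homologically trivial upstairs in $Y$ (or has vanishing effect via the formulas in Lemmas \ref{multipoint} and \ref{multicurve}), so that the lifted map acts trivially on $H_1(Y,\mathbb{Z})$.

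\textbf{Construction of $\phi$.} Let $m = |\mathrm{deck}(Y \to X)|$. Take two simple closed curves $\alpha, \beta$ on $X$ meeting once, each lifting to a loop in $Y$ after raising to the $m$-th power, and form a based loop $\gamma$ at $\star$ that is freely homotopic to the commutator-type curve traced out by going around $\alpha^m$ then $\beta^m$ then back (more precisely, a curve whose class in $\pi_1(X,\star)$ lies in $[\pi_1,\pi_1]$ but not in the next lower-central term, and which in $H_1$ of the puncture-forgotten surface is zero while being nontrivial as an element of $\mathcal{I}/\mathcal{I}_2$). The point push $\phi$ about $\gamma$ then lies in $\mathcal{I}$; it lies outside $\mathcal{I}_2$ because the associated element of $\pi_1(X,\star)/L_2$ is nontrivial — this is where the Johnson homomorphism and the identification with $\Lambda^3 H$ (fact (3) in \S\ref{Jfil}) is used, together with the observation that $\gamma$ maps to the $H$-summand (the point-pushing image). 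For triviality in $\rho$: since $X$ has a single puncture $\star$, a point push acts trivially on $H_1(X,\mathbb{Z})$ by the remark preceding Lemma \ref{multipoint}; upstairs, the lift of $\phi$ is a multi-point push about the lifts of $\gamma$, and because each of $\alpha^m,\beta^m$ lifts to a loop, each lift $\tilde\gamma_i$ is a closed loop whose homology class in $Y$ is a commutator, hence trivial in $H_1(Y,\mathbb{Z})$ — so by Lemma \ref{multipoint} the lift acts trivially. (If the cover is branched over $\star$ one must be slightly careful that the lifts are still closed loops; choosing $\gamma$ supported in the subsurface on which the branching is controlled handles this.)

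\textbf{Construction of $\psi$.} Here I would use the cyclic generation criterion from \S\ref{pushmaps}. Pick a non-separating simple closed curve $\delta$ on $X$, let $\sigma$ be the deck element it determines and $s$ its order, and choose the pushing curve $\gamma$ (an arc from $p$ to $p+\tfrac12$ on $\delta$, thought of as a loop in $(X,\delta)$) so that (i) $\gamma$ satisfies the cyclic generation criterion, so the lift of the push $\psi$ about $\gamma^s$ to $Y$ is again a genuine multi-curve push, and (ii) the class $c$ of $\gamma^s$, together with the self-intersection correction term $I_{\gamma^s}$, is arranged so that the formula of Lemma \ref{multicurve} gives the identity: concretely, take $\gamma$ built from loops that become homologically trivial (powers going into $[\pi_1,\pi_1]$, as with $\phi$) so that both $c$ and $\hat i(\cdot, d)$-contributions vanish upstairs, forcing each lift of $\delta$ and each lift of $\gamma^s$ to have trivial homology class and trivial mutual intersection data in $Y$. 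Then the lift of $\psi$ acts trivially on $H_1(Y,\mathbb{Z})$, i.e. $\psi \in \ker\rho$. That $\psi \in \mathcal{I}\setminus\mathcal{I}_2$ and projects to the $\Lambda^3 H/H$ summand follows because a curve push, unlike a point push, does not lie in the kernel of the puncture-forgetting map $\mathrm{Mod}(X') \to \mathrm{Mod}(X)$: its image there is the difference of a bounding-pair / genus-one type map, which is the standard generator of the second Johnson summand, and is non-trivial — establishing both the membership in $\mathcal{I}\setminus\mathcal{I}_2$ and the "non-trivial image in $\mathrm{Mod}(X)$" clause.

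\textbf{Main obstacle.} The routine part is the homological bookkeeping; the genuinely delicate step is the $\psi$ construction — simultaneously making $\gamma^s$ satisfy the cyclic generation criterion (so the lift is an honest multi-curve push and Lemma \ref{multicurve} applies at all) \emph{and} arranging the homology/self-intersection data so that all the $c_j$, $d_j$, $\hat i(\,\cdot\,,c_j)$, $\hat i(\,\cdot\,,d_j)$ and $I_{\gamma_j}$ terms in that formula vanish upstairs, \emph{while} keeping the downstairs class non-trivial in $\mathcal{I}/\mathcal{I}_2$ and in $\mathrm{Mod}(X)$. I expect this to require choosing $\gamma$ explicitly inside a controlled subsurface (e.g. a one-holed torus or two-holed sphere complementary to $\delta$) where $\pi_1$ is free of small rank, so that "lies in $[\pi_1,\pi_1]\setminus L_2$" and "all $m$-th or $s$-th powers of the building loops lift" can be checked by hand, and the self-intersection term $I_{\gamma^s}$ computed directly. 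Verifying that $\psi$ is not in $\mathcal{I}_2$ — equivalently that its Johnson image is the non-zero element of the $\Lambda^3H/H$ factor — is the crux, and I would do it by an explicit computation of the Johnson homomorphism on the curve push, or by identifying $\psi$ modulo $\mathcal{I}_2$ with a known bounding pair map.
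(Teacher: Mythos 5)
Your construction of $\phi$ has a fatal error. You take the pushing curve $\gamma$ to lie in $[\pi_1,\pi_1]$ (a commutator of $\alpha^m,\beta^m$), but then simultaneously claim it "maps to the $H$-summand" and gives $\phi\in\CI\setminus\CI_2$. These are incompatible: the Johnson image of a point push about $\gamma$ is exactly $[\gamma]\in H\subset\Lambda^3H$, and the point push lies in $\CI_2$ if and only if $\gamma\in[\pi_1,\pi_1]$. So your $\phi$ lands in $\CI_2$ and is useless. The paper's observation is far simpler and runs in the opposite direction: take \emph{any} point push with $[\gamma]\neq 0$ in $H_1(X)$. Because the cover $Y\to X$ is of the \emph{closed} surface $X$ (not branched at $\star$), the lift is a multi-point push of a closed surface $Y$, and every class $d_i$ in Lemma \ref{multipoint} is the class of a small loop around a regular point of $Y$, hence zero in $H_1(Y)$. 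No hypothesis on $c_i$ is needed; $\ker\rho$ comes for free, and $[\gamma]\neq 0$ gives $\phi\notin\CI_2$.

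Your $\psi$ construction is also off course in two related ways. First, you take $\delta$ \emph{non-separating}, whereas the paper takes $\delta$ \emph{separating}. This is not cosmetic: the vanishing in Lemma \ref{multicurve} does not come from making the classes $c_j,d_j$ or the $I_{\gamma_j}$ individually trivial upstairs (that is what you are trying to arrange, and it is in tension with keeping $\psi\notin\CI_2$, as you yourself flag as "the main obstacle"). Instead, the paper uses the global topology: because $\delta$ separates and $\gamma$ passes through every lift of $\delta$ satisfying the cyclic generation criterion, the union $\bigcup_j\wt\delta_j$ separates $Y$, so $\sum_j d_j=0$ in $H_1(Y)$, and all the $c_j$ are the same class $\wt\gamma$. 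Then $\sum_j\wh i(a,c_j)d_j=\wh i(a,\wt\gamma)\sum_j d_j=0$ and likewise $\sum_j\wh i(a,d_j)(c_j+I_{\gamma_j}d_j)$ vanishes because it factors through $\sum_j d_j$, with \emph{no} requirement that $c\neq 0$ die upstairs. Indeed the paper explicitly requires $c\neq 0$ downstairs and then gets $\psi\notin\CI_2$ from the standard Johnson computation $[\psi]=\sum_i[a_i]\wedge[b_i]\wedge c\neq 0$ for the separating-curve push. The "obstacle" you identify dissolves once $\delta$ is chosen separating; as written, your route would likely not close.

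A useful rule of thumb here is that both kernels are obtained not by destroying homology classes upstairs but by exploiting cancellation: for $\phi$, the pushed points are non-punctures of a closed $Y$ so each $d_i=0$; for $\psi$, the collection of lifts of the separating curve bounds, so the sum $\sum d_j=0$. Neither argument touches $c$.
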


\begin{proof}

Pick a element of $\pi_1(X, \star)$ whose projection to $H = H_1(X, \BZ)$ is not zero. Let $\phi$ be the point pushing map about this curve. Since $\phi$ is a point pushing map, we have that $\phi \in \CI$. As stated in the introduction, we have that $\CI/\CI_2 \cong H \oplus \Lambda^3 /H$, where point pushing maps are sent to the first factor using the abelianization map $\pi_1(X, \star) \to H$. By our choice of a curve in $X$, we have that $\phi \notin \CI_2$. 

Finally, note that since the cover $Y \to X$ is not branched over the point $\star$, the lift of $\phi$ to $\tup{Mod}(Y)$ is a multi-point pushing map. Since $Y$ does not have punctures, Lemma \ref{multipoint} gives that $\phi \in \ker \rho$. 

To construct the map $\psi$, start with a separating simple closed curve $\delta$ in $X$ (we can choose one because the genus of $X$ is at least $2$). Let $X_0$ be the surface described in section \ref{pushmaps}, and let $Y_0 \to X_0$ be the corresponding (possibly branched) cover. Choose a curve $\gamma$ in $X_0$ with the following properties: 
\begin{enumerate}
	\item There exists a lift of $\gamma$ to $Y_0$ that is a closed curve that passes through every lift of $\delta$, and satisfies the cyclic generation criterion at each such lift. 
	\item The homology class $c$ of $\gamma$ is not equal to $0$. 
\end{enumerate}

Let $\psi \in \tup{Mod}(X)$ be the push map that pushes the curve $\delta$ along the curve $\gamma$. Let $T_\delta$ be the Dehn twist about $\delta$. By our assumption on $\gamma$, $\psi^s$ lifts to a multi-curve pushing map for some $s$. 

Let $\wt{\delta_1}, \ldots, \wt{\delta}_r$ be the lifts of $\delta$, and $\gamma_1, \ldots, \gamma_r$ be the curves about which they are being pushed. The curves $\gamma_1, \ldots, \gamma_r$ all have the same homology class - a lift of $\gamma^s$ which we denote by $\wt{\gamma}$. Since $\bigcup \limits_{j=1}^r \wt{\delta_j}$ separates $Y$, and $c_j$'s are all equal to each other, the term $\sum\limits_j  \wh{i}(a, d_j) \big( c_j + I_{\gamma_j} d_j \big)$ that appears in Lemma \ref{multicurve} is $0$. Similarly, since $\gamma$ passes through all the lifts of $\delta$ and these separate $Y$, we get that the term $\sum\limits_j \wh{i}(a,c_j) d_j$ 
 is also $0$. Lemma \ref{multicurve} now gives that $\psi \in \ker \rho$. This implies that $\psi \in \CI$.

It now remains to be seen that we can take $\psi \notin \CI_2$. Recall that $\delta$ separates $X$ into two subsurfaces - $X_0, X_1$, where $X_1$ is pushed about a curve in $X_0$. Let $a_1, b_1, \ldots, a_g, b_g$ be a standard generating set for $\pi_1(X, \star)$ such that $a_1, b_1, \ldots, a_j, b_j$ is a standard generating set for $\pi_1(X_1, \star)$. It is a standard calculation that the image of $\psi$ in $\CI/\CI_2 \cong \Lambda^3 H$ is $\sum \limits_{i=1}^j [a_1]\wedge [b_i] \wedge c$ (cf. Section 6.6.2 in \cite{FM}). Since this is not $0$, we have that $\psi \notin \CI_2$.

\end{proof}

\nid We are now ready to prove Theorem \ref{theorem2}. 

\begin{proof}  As we did before, decompose the $\tup{Sp}(2g,\BZ)$-representation  $\Lambda^3H$ into irreducible representations as $\Lambda^3 H \cong H \bigoplus \Lambda^3H/H $ where the first factor is the image of the point-pushing maps and the second factor is the image of maps that are not in the kernel of the forgetful map $\tup{Mod}(X') \to \tup{Mod}(X)$, where $X'$ is the surface obtained from $X$ by puncturing at $\star$. Lemma \ref{mainlemma} gives a point pushing map $\phi \in \CI$  that has non-zero image in the first factor and a curve pushing map $\psi \in \CI$ that has non-zero image in the second factor. 
Denote by $[\phi], [\psi]$ the images of $\phi$ and $\psi$ in $\CI/\CI_2$. Neither of these images is $0$. Thus, the $\tup{Sp}(2g,\BZ)$-orbits of $[\phi], [\psi]$ generate a finite index subgroup of $\CI/\CI_2$. This entire subgroup is in the kernel of the map $\CI/\CI_2 \to \rho(\CI)/\rho(\CI_2)$. This shows that $[\rho(\CI):\rho(\CI_2)] < \infty$. 

As in the proof of Theorem \ref{theorem3}, the $k \geq 3$ case follows from Lemma \ref{nilp}. 
\end{proof}

\section{Proof of Theorem \ref{theorem5}}
\begin{proof}
	Let $G$ be either the fundamental group of the closed surface $X$ with genus $\geq 2$ or $F_n$ with $n \geq 2$. Pick a prime $p$. Let $K$ be the kernel of the map $G \to H_1(G, \BZ/p\BZ)$. For any integer $m$, let $L_m \lhd K$ be the kernel of the map $K \to H_1(K, \BZ/p^m\BZ)$. Let $\rho$ be the homological representation corresponding to the cover given by the subgroup $K$. 
	
	Since $K$ is a characteristic subgroup of $G$ and $L_m$ is a characteristic subgroup of $K$, we get that $L_m \lhd K$. Furthermore, since $G/K$ and $K/L_m$ are $p$-group, we have that $G/L_m$ is a $p$-group, and is thus nilpotent. 
	
	Denote by $d(m)$ the nilpotence degree of $G/L_m$. By definition, the group $\CI_{d(m)}$ acts trivially on $G/L_m$. Thus, it implies that $\CI_{d(m)}$ acts trivially on $H_1(K, \BZ/p^m \BZ)$. This means that the elements of $\rho(\CI_{d(m)})$ are in the $p^m$-congruence subgroup of $\tup{GL}(H_1(K, \BZ))$. 
	
	Fix $k > 0$. There exists $g \in \CI_k$ and $i > 0$ such that $\rho(g)$ is not in the $p^i$-congruence subgroup of $\tup{GL}(H_1(K, \BZ))$. This implies that $\rho(\CI_k) \neq \rho(\CI_j)$ for every $j$ such that $j > d(i)$, $\rho(\CI_j) \neq \rho(\CI_k)$, as required.

	\end{proof}

\end{document}